\newtheorem{Theorem}{Theorem}[section]
\newtheorem{Proposition}[Theorem]{Proposition}
\newtheorem{Lemma}[Theorem]{Lemma}
\newtheorem{Corollary}[Theorem]{Corollary}
\theoremstyle{definition}
\newtheorem{Definition}[Theorem]{Definition}
\newtheorem{Remark}[Theorem]{Remark}
\newcommand{\bTheorem}[1]{
\begin{Theorem} \label{T#1} }
\newcommand{\eT}{\end{Theorem}}
\newcommand{\bProposition}[1]{
\begin{Proposition} \label{P#1}}
\newcommand{\eP}{\end{Proposition}}
\newcommand{\bLemma}[1]{
\begin{Lemma} \label{L#1} }
\newcommand{\eL}{\end{Lemma}}
\newcommand{\bCorollary}[1]{
\begin{Corollary} \label{C#1} }
\newcommand{\eC}{\end{Corollary}}
\newcommand{\ep}{\varepsilon}
\DeclareMathOperator{\Div}{div}
\def\command@factory#1{%
\expandafter\def\csname b#1\endcsname{\mathbf{#1}}
\expandafter\def\csname cl#1\endcsname{\mathcal{#1}}
}
\newcommand{\bRemark}[1]{
\begin{Remark} \label{R#1} }
\newcommand{\eR}{\end{Remark}}
\newcommand{\bDefinition}[1]{
\begin{Definition} \label{D#1} }
\newcommand{\eD}{\end{Definition}}
\newcommand{\Q}{\mathbb{T}^N}
\newcommand{\Del}{\Delta_x}
\newcommand{\Ds}{\mathbb{D}_x}
\newcommand{\p}{\mathbb{P}}
\newcommand{\mn}{\mathbb{N}}
\newcommand{\mt}{\mathbb{T}^N}
\newcommand{\bfu}{\mathbf{u}}
\newcommand{\bfq}{\mathbf{q}}
\newcommand{\bfr}{\mathbf{r}}
\newcommand{\bfA}{\mathbf{A}}
\newcommand{\bfQ}{\mathbf{Q}}
\newcommand{\bfW}{\mathbf{W}}
\newcommand{\bfV}{\mathbf{V}}
\newcommand{\bfZ}{\mathbf{Z}}
\newcommand{\bfX}{\mathbf{X}}
\newcommand{\bfphi}{\boldsymbol{\varphi}}
\newcommand{\bfpsi}{\boldsymbol{\psi}}
\newcommand{\ds}{\,\mathrm{d}\sigma}
\newcommand{\dd}{\mathrm{d}}
\newcommand{\dxs}{\,\mathrm{d}x\,\mathrm{d}\sigma}
\newcommand{\dxt}{\,\mathrm{d}x\,\mathrm{d}t}
\newcommand{\bFormula}[1]{
\begin{equation} \label{#1}}
\newcommand{\eF}{\end{equation}}
\newcommand{\vr}{\varrho}
\newcommand{\vu}{\bfu}
\newcommand{\vc}[1]{{\mathbf{#1}}}
\newcommand{\Grad}{\nabla_x}
\newcommand{\dx}{\,{\rm d} {x}}
\newcommand{\dt}{\,{\rm d} t }
\newcommand{\D}{{\rm d}}
\newcommand{\R}{\mathbb{R}}
\newcommand{\mathd}{\mathrm{d}}
\newcommand{\tmop}{\text}
\def\softd{{\leavevmode\setbox1=\hbox{d}%
          \hbox to 1.05\wd1{d\kern-0.4ex{\char039}\hss}}}
\definecolor{Cgrey}{rgb}{0.85,0.85,0.85}
\definecolor{Cblue}{rgb}{0.50,0.85,0.85}
\definecolor{Cred}{rgb}{1,0,0}
\definecolor{fancy}{rgb}{0.10,0.85,0.10}
\newcommand\Cbox[2]{%
    \newbox\contentbox%
    \newbox\bkgdbox%
    \setbox\contentbox\hbox to \hsize{%
        \vtop{
            \kern\columnsep
            \hbox to \hsize{%
                \kern\columnsep%
                \advance\hsize by -2\columnsep%
                \setlength{\textwidth}{\hsize}%
                \vbox{
                    \parskip=\baselineskip
                    \parindent=0bp
                    #2
                }%
                \kern\columnsep%
            }%
            \kern\columnsep%
        }%
    }%
    \setbox\bkgdbox\vbox{
        \color{#1}
        \hrule width  \wd\contentbox %
               height \ht\contentbox %
               depth  \dp\contentbox
        \color{black}
    }%
    \wd\bkgdbox=0bp%
    \vbox{\hbox to \hsize{\box\bkgdbox\box\contentbox}}%
    \vskip\baselineskip%
}
\date{}
\begin{document}


\title{Compressible Navier--Stokes system with transport noise}

\author{Dominic Breit}
\address[D. Breit]{
Department of Mathematics, Heriot-Watt University, Riccarton Edinburgh EH14 4AS, UK}
\email{d.breit@hw.ac.uk}

\author{Eduard Feireisl}
\address[E. Feireisl]{Institute of Mathematics AS CR, \v{Z}itn\'a 25, 115 67 Praha 1, Czech Republic
\and Institute of Mathematics, TU Berlin, Strasse des 17.Juni, Berlin, Germany }
\email{feireisl@math.cas.cz}
\thanks{The research of E.F. leading to these results has received funding from
the Czech Sciences Foundation (GA\v CR), Grant Agreement
21--02411S. The Institute of Mathematics of the Academy of Sciences of
the Czech Republic is supported by RVO:67985840. }

\author{Martina Hofmanov\'a}
\address[M. Hofmanov\'a]{Fakult\"at f\"ur Mathematik, Universit\"at Bielefeld, D-33501 Bielefeld, Germany}
\email{hofmanova@math.uni-bielefeld.de}
\thanks{M.H. has received funding from the European Research Council (ERC) under the European Union's Horizon 2020 research and innovation programme (grant agreement No. 949981). The financial support by the German Science Foundation DFG via the Collaborative Research Center SFB1283 is gratefully acknowledged.}

\author{Ewelina Zatorska}
\address[E. Zatorska]{Department of Mathematics, Imperial College London,
 London SW7 2AZ, United Kingdom }
\email{e.zatorska@imperial.ac.uk}
\thanks{The work of E.Z. was supported by the EPSRC Early Career Fellowship no. EP/V000586/1.}

\begin{abstract}
We consider the barotropic Navier--Stokes system driven by a physically well-motivated transport noise in both continuity as well as momentum equation. We focus on  three different situations:
(i) the noise is smooth in time and the equations are understood as in the sense of the classical weak deterministic theory, (ii) the noise is rough in time and we interpret the equations in the framework of rough paths with unbounded rough drivers and (iii) we have a Brownian noise of Stratonovich type and study the existence of martingale solutions. The first situation serves as an approximation for (ii) and (iii), while (ii) and (iii) are motivated by recent results on the incompressible Navier--Stokes system concerning the physical modeling
as well as regularization by noise.
\end{abstract}

\subjclass[2010]{60H15, 35R60, 76N10,  35Q35}
\keywords{Compressible fluids, stochastic Navier--Stokes system, transport noise}

\date{\today}

\maketitle

\section{Introduction}

Many SPDEs studied in the context of fluid mechanics concern fluids \emph{driven} by stochastic forcing. 
Randomness is incorporated through the effect of the outer world while the fluid model remains deterministic. 
The recent striking discoveries concerning possible ill--posedness of some well established mathematical models, notably the Euler and Navier--Stokes systems, motivated a renewed interest in a proper modification of the existing models to restore regularity and well--posedness at least at a stochastic level.

Mikulevicius and Rozovskii \cite{MiRo} introduced randomness at the Lagrangian level imposing a stochastic forcing in the equation for the streamlines $\bfX = \bfX(t)$, 
\begin{equation} \label{Mo1}
\D \bfX = \vu (t, \bfX) \dt + \sigma \circ \D \bfW,
\end{equation}
where the macroscopic velocity $\vu$ is augmented by a stochastic forcing of Stratonovich type.
Similar ideas were incorporated in the general theory developed by Holm \cite{HOLM}, see also the most recent development in \cite{ABDT}, \cite{DRHO}, \cite{HOLM1}. In the same spirit, a series of different models 
has been proposed by Cruzeiro et al. \cite{ARCR}, \cite{ChCrRa}, \cite{CRLAS}, including stochastic variants of the compressible Navier--Stokes system, see Section \ref{sec:CCR} below. 

From a more mathematical perspective the right choice of transport noise can lead to regularization effects and improve well-posedeness results for deterministic problems.  A breakthrough in this direction has been achieved in
\cite{FGP}, where it has been shown that transport noise has regularizing effects on the transport equation. A first result for the incompressible Navier--Stokes equations has been recently established in the remarkable work \cite{FL}. It is proved that a particular transport noise  delays the blow-up of the vorticity with large probability. As a matter of fact, it is not stochasticity of the forcing which provides this regularization but the roughness of the perturbation. The same effect is present in the case of a suitable purely deterministic forcing
 in the context of geometric rough paths, cf. \cite{FHLN} .

The goal of the present paper is to develop a rigorous existence theory for models of compressible fluids including the  random effects motivated by \eqref{Mo1}. In general, we consider the randomness as an intrinsic property of the fluid motion without any impact on the bulk macroscopic quantities, in particular, the \emph{total} mass, momentum, and energy. Accordingly, the bulk velocity will undergo random perturbations 
only in the convective terms, while the same quantity remains unchanged in the advected quantities and diffusion transport terms. To a certain extent, the idea is similar to the ``two velocities concept'' advocated by Brenner \cite{BREN}, \cite{BREN1}.  
The two-velocity formulation and resulting drift in the balance of mass and momentum are also postulated for the second-order Aw-Rascle model of vehicular traffic \cite{AR}. In this model the velocity of free traffic $u$ differs from the actual velocity $w$ by the velocity offset $p$. 
The model then consists of two conservation equations of mass and of the augmented momentum:
\begin{equation*}
\left\{
\begin{array}{l}
	\partial_t n + \partial_x (n u) = 0,\\ 
	\partial_t (n w) + \partial_x (n u w ) = 0,
\end{array}\right.
\end{equation*}
or equivalently
\begin{equation*}
\left\{
\begin{array}{l}
	\partial_t n + \partial_x (n w) =  \partial_x (n p),\\ 
	\partial_t (n w) + \partial_x (n w w) = \partial_x (n w p),
\end{array}\right.
\end{equation*}
where the form of the drift terms  becomes apparent on the right hand side. In the simplest setting,  the difference between the velocities depends only on the concentration of the cars, i.e., there is a ``pseudo-pressure'' function $p(n)$ such that $w=u+p(n)$. For example, in \cite{BDDR}, $p(n)\approx \frac{n}{n^*-n}$, where $n^*$ denotes some safe concentration of the cars, assumed to be constant. However, by analogy to the first order models, $n^*$ could also include: the dependence on velocity of the cars (the safe distances between the drivers should increase with the speed of driving), the random dependence on the space variable (modeling obstacles on the road), or the dependence on the initial condition and time (to take into account individual initial preferences of the interacting agents). See, for example, \cite{BDBMRR, DMNZ,PT} for a relevant overview of the models and results.
\medskip

For the sake of simplicity, we focus on the barotropic fluid ignoring the temperature changes. In accordance with the above philosophy, the field equations in the unknown velocity $\bfu$ and density $\varrho$ read 

\begin{mdframed}[style=MyFrame] 
\begin{align}
	\D \vr + \Div (\vr \vu) \dt &= \Div (\vr \mathbb{Q} ) \circ \D \bfW
	\label{p1} \\ 
	\D (\vr \vu) + \Div (\vr \vu \otimes \vu) \dt + \Grad p(\vr) \dt &= \Div \mathbb{S}(\Ds \vu) \dt + \Div (\vr \vu \otimes \mathbb{Q} ) \circ \D \bfW ,
\label{p2}
\end{align}
where 
\[
\Div (\vr \mathbb{Q} ) \D \bfW = \partial_{x_j} (\vr Q_{j,k} ) \D W_k,\ 
[ \Div (\vr \vu \otimes \mathbb{Q} ) \D \bfW ]_i = \partial_{x_j} (\vr u^i Q_{j,k} ) \D W_k,  
\]
and $\bfW$ is a $K$-dimensional  Wiener process. 

\end{mdframed}
Thus, similarly to \eqref{Mo1}, the advective component of the velocity has been augmented by a random term 
\[
\mathbb{Q} \circ \D \bfW.
\]
We suppose Newton's rheological law
\begin{align}\label{eq:nr}
\mathbb S(\Ds\bfu)=2\mu\Ds\vu+\eta\Div\bfu\,\mathbb I=2\mu\big(\nabla\bfu+\nabla\bfu^T\big)+\eta\Div\bfu\,\mathbb I
\end{align}
with strictly positive viscosity coefficients $\mu$ and $\eta$
for the viscous stress tensor, and the adiabatic pressure law $p(\varrho)=a\varrho^\gamma$ with $a>0$ and $\gamma>1$.
Finally, we impose the periodic boundary conditions 
\begin{equation} \label{p5}
	\Omega = \mathbb{T}^N,\quad N=2,3.
	\end{equation}

	In contrast to the variants of the stochastic compressible Navier--Stokes system considered so far, cf.  \cite{BrHo,BFHbook,Sm}, the noise in \eqref{p1}--\eqref{p2} is energy conservative (we outline this below in Subsection \ref{EE}). In fact, the noise considered in previous papers is constantly adding energy to the system. This is physically unreasonable and even results in the non-existence of stationary solutions for the full Navier--Stokes--Fourier system of heat-conducting fluids, cf. \cite[Section 7]{BF}.
	
	In the context of transport noise we consider three different situations:
	\begin{itemize}
\item[(i)] The noise is smooth in time and the equations are understood as in the classical weak deterministic theory from \cite{Li2,feireisl1}, see Theorem \ref{thm:main} for details.
\item[(ii)] The noise is rough in time and we interpret the equations in the framework of rough paths with unbounded rough drivers, see Theorem \ref{thm:mainrough}.
\item[(iii)] We have a Brownian noise of Stratonovich type and study the existence of weak martingale solutions in the spirit of \cite{BrHo}, see Theorem \ref{thm:mainstrat}. 
\end{itemize}
The study of the system subject to smooth noise in Section \ref{s:8} is rather an auxiliary result (which we use as an approximate system for the other cases). Its analysis is based on a three-layer approximation scheme as introduced in \cite{feireisl1}. 

Eventually, we turn to the case of a rough noise in Section \ref{s:r}. Here, the system can be driven by a general geometric 2-step rough path which gives some flexibility from the modeling point of view. One can consider a Brownian motion but also a fractional Brownian motion with Hurst parameter $H>1/3$ or other Gaussian processes which can be lifted to such rough paths. The proof uses the power of rough path theory and the analysis here is entirely deterministic. Namely, after the corresponding rough path has been built, possibly using probability theory, we fix its  realization $\omega$ and deduce existence of a weak solution. This solution is  obtained via a Wong--Zakai approximation, i.e., as limit of solutions corresponding to smooth approximate noises.

Since uniqueness is an open problem, solutions obtained this way are not stochastic processes. Indeed, measurability has  been lost by taking subsequences depending on $\omega$. We overcome this issue by combining rough path theory and probabilistic arguments in Section \ref{sec:strat}. More precisely, we consider
\eqref{p1}--\eqref{p2} perturbed by a stochastic transport noise of Stratonovich type. We employ the rough path ideas from Section \ref{s:r} and establish  existence of a martingale solution based on the stochastic compactness method using Jakubowski's extension of the Skorokhod representation theorem from \cite{jakubow}.

The key difficulty in both (ii) and (iii) is to obtain strong convergence of the density based on the effective viscous flux identity mentioned above. In the smooth setting we are able to find a commutator which gains one derivative and therefore permits to get the necessary estimate and to pass to the limit. However, this does not seem to be possible in the irregular setting unless the vector fields $\mathbb{Q}$ are independent of the spatial variable.\footnote{In view of possible extensions of our results to the Dirichlet problem, where the noise should vanish at the boundary, it would be desirable to allow $x$-dependence of $\mathbb Q$.} For constant vector fields, we prove a nice cancellation of the corresponding noise terms.
As a consequence, the strong convergence of the density can be reduced to the classical arguments from the deterministic theory from \cite{Li2,feireisl1}.

	\subsection{Energy estimates}
\label{EE}
 Let us explain on a formal level why the noise in \eqref{p1}--\eqref{p2} is energy conservative. 
Let $P$ satisfying 
\[
P'(\vr) \vr - P(\vr) = p(\vr)
\]
be the associated pressure potential.
Multiplying \eqref{p1} on $P'(\vr)$ we get 
\begin{align}
	\D P(\vr) + \Div (P(\vr) \vu) \dt + p(\vr) \Div \vu \dt = P'(\vr) \Div (\vr \mathbb{Q} ) \circ\D \bfW.
\label{e1}
\end{align}
Repeating the same with $- \frac{1}{2} |\vu|^2$ yields
\begin{align}
 - \frac{1}{2} |\vu|^2\D \vr - \frac{1}{2} |\vu|^2 \Div (\vr \vu) \dt &= - \frac{1}{2} |\vu|^2 \Div (\vr \mathbb{Q} ) \circ \D \bfW
\label{e2}
\end{align}
Finally, multiplying \eqref{p2} on $\vu$ yields
\begin{align}
\begin{aligned}
	|\vu|^2 \D \vr &+ \vr \frac{1}{2} \D |\vu|^2 + \Div (\vr \vu) |\vu|^2 \dt + \vr \vu \frac{1}{2} \Grad |\vu|^2 \dt + \Div (p(\vr) \vu) \dt - p(\vr) \Div \vu \dt \\
=&\Div (\mathbb{S}(\Ds \vu) \cdot \vu ) \dt - \mathbb{S}(\Ds \vu) : \Ds \vu \dt + \vu \cdot \Div (\vr \vu \otimes \mathbb{Q} ) \circ \D \bfW.
\label{e3}
\end{aligned}
\end{align}
Summing up \eqref{e1}--\eqref{e3} we obtain the total energy balance
\begin{align}
\begin{aligned}
	\D \left[ \frac{1}{2} \vr |\vu|^2 + P(\vr) \right] &+ \Div \left(  \left[ \frac{1}{2} \vr |\vu|^2 + P(\vr)  \right] \vu \right) \dt + 
	\Div \Big( p(\vr) \vu - \mathbb{S}(\Ds \vu) \cdot \vu \Big) \dt \\
	&= - \mathbb{S} (\Ds \vu) : \Ds \vu \dt \\ &\quad + P'(\vr) \Div (\vr \mathbb{Q} ) \circ \D \bfW - \frac{1}{2} |\vu|^2 \Div (\vr \mathbb{Q}) \circ \D \bfW.
	+ \vu \cdot \Div (\vr \vu \otimes \mathbb{Q} ) \circ \D \bfW
	\label{e4}
	\end{aligned}
	\end{align}
In addition, after a straightforward manipulation,
\[ 
- \frac{1}{2} |\vu|^2 \Div (\vr \mathbb{Q}) \circ \D \bfW
+ \vu \cdot \Div (\vr \vu \otimes \mathbb{Q} ) \circ \D \bfW = \Div \left( \frac{1}{2} \vr |\vu|^2 \mathbb{Q} \right) \circ \D \bfW,
\]
and 
\[
P'(\vr) \Div (\vr \mathbb{Q} ) \circ \D \bfW = \Div (P(\vr) \mathbb{Q} ) \circ \D \bfW + p(\vr) \Div \mathbb{Q} \circ \D \bfW.
\]

Thus the energy balance \eqref{e4} reads 
\begin{align}
\begin{aligned}
	\D \left[ \frac{1}{2} \vr |\vu|^2 + P(\vr) \right] &+ \Div \left(  \left[ \frac{1}{2} \vr |\vu|^2 + P(\vr)  \right] \vu \right) \dt + 
	\Div \Big( p(\vr) \vu - \mathbb{S}(\Ds \vu) \cdot \vu \Big) \dt \\
	&= - \mathbb{S} (\Ds \vu) : \Ds \vu \dt + \Div \left( \left[ \frac{1}{2} \vr |\vu|^2 + P(\vr) \right] \mathbb{Q} \right)  \circ \D \bfW \\
	&\quad + p(\vr) \Div \mathbb{Q} \circ \D \bfW
	\label{e5}
	\end{aligned}
\end{align}
In order to control the energy, we need 
\begin{align}
	\Div \mathbb{Q} = 0 \ \Leftrightarrow \ \partial_{x_j} Q_{j,k} = 0 \ \mbox{for any}\ k=1,\dots, K.
\label{e6}	
\end{align}
Under this additional assumption we obtain finally
$$	\D \int_{\mt}\left[ \frac{1}{2} \vr |\vu|^2 + P(\vr) \right]\dx=- \int_{\mt}\mathbb{S} (\Ds \vu) : \Ds \vu \dx\dt.$$
	
	\subsection{Comparison with Chen, Cruzeiro, Ratiu}
\label{sec:CCR}
Finally, we compare the present model with the approach of Chen, Cruzeiro, and Ratiu \cite{ChCrRa}. In \cite[Theorem 5.5]{ChCrRa} they propose the following model 
\begin{align*}
\begin{aligned}
	\D \vr + \Div (\vr \vu) \dt &= 0,\\ 
	\D \vu + \vu \cdot \Grad \vu &= - \frac{1}{\vr} \left( \sqrt{2 \mu} \Grad \vu \cdot \D \bfW - \mu \Del \vu \dt - \eta \Grad \Div \vu \dt + \Grad p(\vr) \dt \right).
	\end{aligned}
	\end{align*}
Here, in addition, the term $\Grad \vu \cdot \D \bfW$ is interpreted as 
\[
\sum_{k}\partial_{x_k} \vu \,\D W_k, \ W_k = (W,\dots, W)
\]
and the stochastic integral is It\^ o's integral. 

As the equation of continuity is deterministic, the system can be written as 
\begin{align*}
	\D \vr + \Div (\vr \vu) \dt &= 0,\\ 
	\D (\vr \vu) + \Div (\vr \vu \otimes\vu )\dt + \Grad p(\vr) \dt &= 
	\mu \Del \vu \dt + \eta \Grad \Div \vu \dt - \sqrt{2 \mu} \sum_k \partial_{x_k} \vu \,\D W.
\end{align*}
In the associated energy balance, there is the It\^ o's correction term 
\[
\frac{\mu}{\vr} \sum_k \left|\partial_{x_k} \vu_{i} \right|^2,\ i = 1,2,3,
\] 
which does not seem to be  controllable due to the appearance of the density in the denominator. So, it is unclear how to prove 
energy estimates.

\section{Smooth noise}
\label{s:8}

In this section we establish existence of a solution to \eqref{p1}--\eqref{p2} under the assumption that the noise is smooth in time. Specifically, we assume 
\begin{align}\label{noise1}
\mathbb Q&=(Q_k)_{k=1}^K,\quad Q_k\in W^{2,\infty}_{\mathrm{div}}(\mt,\R^{N}),\\
\bfW&=(W_k)_{k=1}^\infty,\quad W_k\in C^1(\overline I,\R^N).\label{noise3}
\end{align}
and set $\mathbb Q\bfW:=\sum_{k=1}^K Q_kW_k\in W^{1,\infty}(I\times\mt,\R^N)$.

A weak solution $(\bfu,\varrho)$ satisfies the system in the following sense: 
\begin{itemize}
\item\label{D1} The momentum equation \begin{align}\label{eq:mom}
\begin{aligned}
&-\int_I\int_{\mt} \Big(\varrho \bfu\cdot \partial_t \bfphi +\varrho \bfu\otimes \bfu:\nabla_x \bfphi\Big)\dxt
\\
&+\int_I\int_{\mt}\mathbb S(\nabla_x \bfu):\nabla_x \bfphi \dxt-\int_I\int_{\mt}
p(\varrho)\,\Div \bfphi \dxt
\\&=\int_{\mt}\bfq_0 \cdot \bfphi(0)\dx-\int_I\int_{\mt}\varrho \bfu\otimes\mathbb Q:\nabla_x \bfphi\,\partial_t \bfW\dxt
\end{aligned}
\end{align} 
holds for all $\bfphi\in C^\infty(\overline I\times\mt)$ with $\bfphi(T)=0$.
\item\label{D2}  The continuity equation
\begin{align}\label{eq:con}
\begin{aligned}
-\int_I\int_{\mt}\Big(\varrho\partial_t\psi
+\varrho \bfu\cdot\nabla\psi\Big)\dxt&=\int_{\mt}\varrho_0 \psi(0)\dxt\\&+\int_I\int_{\mt}\varrho \mathbb Q\cdot\nabla_x \psi\,\partial_t \bfW\dxt
\end{aligned}
\end{align}
for all $\psi\in C^\infty(\overline{I}\times\mt)$ with $\psi(T)=0$. In addition, 
the renormalized version of \eqref{eq:con} is satisfied, 
\begin{align}\label{eq:ren}
	\begin{aligned}
		-\int_I\int_{\mt}\Big(\theta(\varrho)\partial_t\psi
		+\theta(\varrho) \bfu\cdot\nabla\psi\Big)\dxt&=\int_{\mt}\theta(\varrho_0) \psi(0)\dxt\\
		&-\int_I\int_{\mt}\big(\theta(\varrho)-\theta'(\varrho)\varrho\big)\,\Div\bfu\psi\dxt
		\\&+\int_I\int_{\mt}\theta(\varrho) \mathbb Q\cdot\nabla \psi\,\partial_t \bfW\dxt
	\end{aligned}
\end{align} 
for all $\psi\in C^\infty(\overline I \times \mt)$ with $\psi(T)=0$ and all $\theta\in C^1([0,\infty))$ with $\theta'(z) \in C_c[0, \infty)$.
 
\item \label{D3} The energy inequality
\begin{align} \label{eq:ene}
\begin{aligned}
- \int_I &\partial_t \psi \,
\mathscr E \dt+\int_I\psi\int_{\mt}\mathbb S(\nabla_x \bfu):\nabla_x \bfu\dxt\leq
\psi(0) \mathscr \int_{\mt} \left( \frac{1}{2} \frac{|\vc{q}_0|^2 }{\vr_0} + P (\vr_0) \right) \dx
\end{aligned}
\end{align}
holds for any $\psi \in C^\infty_c([0, T))$. 
Here, we abbreviated
$$\mathscr E(t)= \int_{\Omega(t)}\Big(\frac{1}{2} \varrho(t) | {\bfu}(t) |^2 + P(\varrho(t))\Big)\dx, $$
where the pressure potential is given by $P(\varrho)=\frac{a}{\gamma-1}\varrho^\gamma$.
\end{itemize}
\begin{Theorem}\label{thm:main}
Let 
\[
\gamma > \frac{N}{2}.
\]
Assume that we have
\begin{align*}
\varrho_0\in L^{\gamma}(\mt),\ \vr_0 \geq 0,\ 	
\frac{|\bfq_0|^2}{\varrho_0}&\in L^1(\mt).
\end{align*}
Furthermore, suppose that $\mathbb Q$ and $\bfW$ satisfy \eqref{noise1}--\eqref{noise3}.

Then there exists
a solution $$(\bfu,\varrho)\in L^2(I;W^{1,2}(\mt))\times C_w(\overline I;L^\gamma(\mt))$$ to \eqref{eq:mom}--\eqref{eq:ene}. 

In addition, the associated pressure admits the estimate 
\begin{align}\label{eq:higherpressure}
	\int_{I\times \mt}p(\varrho)\varrho^{\Theta}\,\dd x\,\dd t\leq c,\ \Theta < \frac{2}{N} \gamma - 1,
\end{align}
where $c$ depends on the norms of $\mathbb Q$ and $\bfW$ specified in \eqref{noise1}--\eqref{noise3}. If $\mathbb Q$ is independent of $x$ estimate \eqref{eq:higherpressure} is independent of $\mathbb Q$ and $\bfW$.

\end{Theorem}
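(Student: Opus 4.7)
The plan is to follow the three-layer approximation scheme of \cite{feireisl1}, viewing $\partial_t\bfW$ as a given bounded continuous function of time by virtue of \eqref{noise3}. Only the transport-noise drifts, which are of lower order than the principal parts, require new input.

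For fixed parameters $\varepsilon,\delta>0$ and a large exponent $\beta$, and for $\vu$ in a Galerkin subspace $X_n\subset W^{1,2}(\mt)$, the regularized continuity equation
\begin{equation*}
\partial_t\vr+\Div(\vr\vu)=\varepsilon\Delta\vr+\Div(\vr\mathbb Q)\,\partial_t\bfW,\qquad \vr(0)=\vr_{0,\delta},
\end{equation*}
is a uniformly parabolic linear equation whose effective drift $\vu-\mathbb Q\,\partial_t\bfW$ is bounded and Lipschitz; standard maximal regularity together with a comparison principle yield a unique non-negative strong solution $\vr[\vu]$ depending continuously on $\vu$. Inserting this into the Galerkin projection of the momentum equation, augmented by the artificial pressure $p(\vr)+\delta\vr^\beta$, produces an ODE for the basis coefficients that is solved locally in time by Schauder's fixed point theorem. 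Global existence and the successive limits $n\to\infty$ and $\varepsilon\to 0$ are controlled by the energy identity: the condition $\Div\mathbb Q=0$ built into \eqref{noise1} makes the formal computation of Subsection \ref{EE} rigorous at each level, the stochastic contributions in \eqref{e5} integrate to zero in space, and one obtains uniform bounds on $\sqrt{\vr}\vu$ in $L^\infty(I;L^2)$, on $\nabla\vu$ in $L^2(I\times\mt)$, and on $\vr$ in $L^\infty(I;L^\gamma)$. The higher pressure bound \eqref{eq:higherpressure} is then produced by testing the momentum equation with $\nabla\Delta^{-1}\big[\vr^{\Theta}-\langle\vr^{\Theta}\rangle\big]$; the additional noise-generated term is absorbed using \eqref{noise1}--\eqref{noise3} and the energy bound, and when $\mathbb Q$ does not depend on $x$ the change of variables $\tilde x=x-\mathbb Q\bfW(t)$ reduces the whole system to the classical deterministic one, yielding the final claim of the theorem.

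The decisive step is the limit $\delta\to 0$, where one must identify the weak limit $\overline{p(\vr)}$ with $p(\vr)$. Following \cite{Li2,feireisl1}, I would establish the effective viscous flux identity
\begin{equation*}
\overline{\big(p(\vr)-(2\mu+\eta)\Div\vu\big)\,\theta(\vr)}=\big(\overline{p(\vr)}-(2\mu+\eta)\Div\vu\big)\,\overline{\theta(\vr)}
\end{equation*}
by testing the approximate momentum equation with $\nabla\Delta^{-1}[\theta(\vr_n)]$ and comparing the outcome with the renormalized continuity equation tested against the same expression. The novel ingredient is the commutator
\begin{equation*}
\big[\,\nabla\Delta^{-1}\Div,\,Q_k\cdot\nabla\,\big]\vr_n,
\end{equation*}
generated when matching the two noise terms across the equations. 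Since $Q_k\in W^{2,\infty}$, a Coifman--Meyer type estimate shows that this commutator gains one spatial derivative and is therefore compact in $L^2(\mt)$ uniformly in $n$; its limit precisely cancels the corresponding contribution from the continuity equation. Once the effective viscous flux identity is in hand, monotonicity of the pressure combined with the oscillation defect measure argument and the renormalization property of \eqref{eq:con}---whose DiPerna--Lions commutator involving $\mathbb Q$ benefits from the same regularity gain---yield $\vr_n\to\vr$ almost everywhere, which allows one to pass to the limit in the pressure and conclude.

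The main obstacle is precisely the analysis of these two commutators, and the smoothness of $\mathbb Q$ is what makes them tractable; the authors announce in the introduction that in the rough regimes (ii)--(iii) this very step forces the restriction to $x$-independent $\mathbb Q$. Everywhere else the argument is a routine adaptation of the deterministic theory, because $\partial_t\bfW$ enters only as a bounded time-dependent coefficient and does not interact with the spatial weak-convergence analysis.
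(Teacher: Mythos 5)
Your plan coincides with the paper's proof: the same three-layer approximation of Feireisl--Novotn\'y--Petzeltov\'a with $\partial_t\bfW$ treated as a bounded coefficient, the same $\Div\mathbb Q=0$ cancellation in the energy balance, the pressure estimate via testing with $\nabla\Delta^{-1}[\vr^\Theta]$, and the effective viscous flux identity closed by the commutator structure in the noise term. The only cosmetic differences are that the paper invokes the div--curl lemma of \cite[Lemma 3.4]{feireisl1} where you appeal to a Coifman--Meyer gain-of-derivative estimate (both exploit the identical commutator $\vr\big(R_{ij}[\vr^\Theta Q_{jk}]-Q_{jk}R_{ij}[\vr^\Theta]\big)$), and the paper observes directly that this commutator vanishes for constant $\mathbb Q$, whereas you reach the same conclusion by the Galilean change of variables $\tilde x=x-\mathbb Q\bfW(t)$; both routes give the claimed $\mathbb Q$-independence of \eqref{eq:higherpressure}.
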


\subsection{The approximate solutions}

As the ``noise'' is smooth, the weak solutions can be constructed by means of the approximate scheme 
introduced in \cite[Chapter 7]{EF70}. First, fix 
a family of parameters
$\varepsilon>0,\,\delta>0$ and $\beta> \max\{4,\gamma\}$. Next, consider a suitable orthogonal system formed by a family of smooth functions $(\bfpsi_n)$. We choose
 $(\bfpsi_n)$ such that it is an orthonormal system with respect to the $L^2(\mathbb T^N)$ inner product which is orthogonal with respect to the the $W^{l,2}(\mathbb T^N)$ inner product.
Note that in the present setting, the basis  $(\bfpsi_n)$ can be formed by trigonometric polynomials.
Now, let us define the finite dimensional spaces
$$X_m=\mathrm{span}\{\bfpsi_1,\dots,\bfpsi_m\},\quad m\in\mn,$$
and let $P_m:L^2(\mt)\rightarrow X_m$ be the projection onto $X_m$.
We aim to find a solution $(\bfu,\varrho)$ to the following system.
\begin{itemize}
\item\label{E1} The momentum equation holds in the sense that\begin{align}\label{eq:mom:m}
\begin{aligned}
&\int_{\mt}\varrho \bfu\cdot \bfphi\dx-\int_0^t\int_{\mt}\varrho \bfu\otimes \bfu:\nabla_x \bfphi\dxs
\\
&+\int_0^t\int_{\mt}\mathbb S(\nabla_x \bfu):\nabla_x \bfphi \dxt-\int_0^t\int_{\mt}
\Big( p(\varrho) + \delta \vr^\beta \Big)\,\Div \bfphi\dxt-\varepsilon\int_0^t\int_{\mt}\varrho\bfu\cdot\Delta_x\bfphi\dxs
\\&=\int_{\mt}\bfq_0 \cdot \bfphi\dx-\int_0^t\int_{\mt}\varrho \bfu\otimes\mathbb Q:\nabla_x \bfphi\,\partial_t \bfW\dxs
\end{aligned}
\end{align} 
for all $\bfphi\in X_m$.
\item\label{E2}  The continuity equation holds in the sense that
\begin{align}\label{eq:con:m} 
\begin{aligned}
\partial_t\varrho+\Div(\varrho \bfu)=\varepsilon\Delta \varrho+\Div(\varrho\,\mathbb Q)\partial_t \bfW
\end{aligned}
\end{align}
in $I\times\mt$, and $\varrho(0)=\varrho_{0, \delta}$, where 
$\vr_{0,\delta}$ is a smooth approximation of the initial density $\vr_0$. 
\end{itemize}

Observe that the approximate system is almost the same as in \cite[Chapter 7]{EF70}, with the velocity 
in the convection term augmented by a smooth solenoidal component 
\[
\vc{w} = \mathbb{Q} \cdot \partial_t \vc{W}. 
\]
Thus the proof of convergence, consisting in three successive limits, 
\[
m \to \infty,\ \ep \to 0, \ \delta \to 0,
\]
remains essentially the same provided we clarify the following issues:
\begin{itemize}
	
	\item Total mass and energy estimates for the system with the extra drift terms. 
	\item The pressure estimates claimed in \eqref{eq:higherpressure}. 
	\item The so-called Lions' identity for the effective viscous flux.
	
	\end{itemize}

\subsection{Total mass and energy estimates} 

Obviously, the total mass 
\[
\int_{\mt} \vr \dx = M_0 
\]
remains constant, meaning determined by the initial data at any level of approximation. Moreover, as observed 
in Section \ref{EE}, the total energy is conserved even in the case of a ``non--smooth'' noise, specifically,
 \[
- \int_I \partial_t \psi \,
	\mathscr E \dt+\int_I\psi\int_{\mt}\mathbb S(\nabla_x \bfu):\nabla_x \bfu\dxt \nonumber \leq
	\psi(0) \mathscr \int_{\mt} \left( \frac{1}{2} \frac{|\vc{q}_0|^2 }{\vr_0} + P (\vr_0) \right) \dx
	\]
holds for any $\psi \in C^\infty_c([0, T))$. Here, the inequality is pertinent to weak solutions. As the energy balance is obtained, at any level of approximation, via the scalar product of the momentum equation with the velocity $\vu$, all relevant estimates remain valid in the new setting. 

%
%
%
%
%
%
%
%
Similarly to \cite[Chapter 7]{EF70} we obtain the following result.

\begin{Proposition}\label{thm:m}
Assume that we have for some $\alpha\in(0,1)$
\begin{align*}
\frac{|\bfq_0|^2}{\varrho_0}&\in L^1(\mt),\ \varrho_0\in C^{2,\alpha}(\mt).
\end{align*}
Furthermore, suppose that $\mathbb Q$ and $\bfW$ satisfy \eqref{noise1}--\eqref{noise3} and that $\varrho_0$ is strictly positive. Then there is a solution $$(\bfu,\varrho)\in C(\overline I;X_m)\times C^{\alpha}(\overline I;C^{2+\alpha}(\mt))$$ to \eqref{eq:mom:m}--\eqref{eq:con:m}. 
\end{Proposition}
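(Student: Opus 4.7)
\medskip

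\noindent\textbf{Proof plan.}
Since the noise is smooth in time, i.e., $\partial_t\bfW\in C(\overline I;\R^\infty)$ and hence $\mathbb Q\,\partial_t\bfW\in C(\overline I;W^{2,\infty}_{\mathrm{div}}(\mt;\R^N))$, the system \eqref{eq:mom:m}--\eqref{eq:con:m} is in effect a deterministic system in which the convective velocity has been replaced by $\bfu-\mathbb Q\,\partial_t\bfW$ (up to the standard integration by parts made possible by $\Div\mathbb Q=0$). Consequently, the proof reduces to a mild adaptation of the construction carried out at the first (Galerkin) level in \cite[Chapter 7]{EF70}. The plan is to decouple the two equations, solve the continuity equation for a given $\bfu$, substitute into the momentum equation to obtain a system of ODEs on $X_m$, and close the loop by a fixed point argument.

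First I would fix $\bfu\in C(\overline I;X_m)$, which is automatically $C(\overline I;C^\infty(\mt;\R^N))$ since $X_m$ is finite dimensional, and rewrite \eqref{eq:con:m} using $\Div\mathbb Q=0$ as
\begin{equation*}
\partial_t\varrho+\bigl(\bfu-\mathbb Q\,\partial_t\bfW\bigr)\cdot\nabla_x\varrho+\varrho\,\Div\bfu=\varepsilon\,\Delta_x\varrho,\qquad \varrho(0,\cdot)=\varrho_{0,\delta}.
\end{equation*}
This is a linear, uniformly parabolic equation with H\"older continuous coefficients on the smooth torus, so classical Schauder estimates (Ladyzhenskaya--Solonnikov--Ural'tseva) yield a unique solution $\varrho=S(\bfu)\in C^{\alpha/2,\alpha}_{t,x}\cap C^{1+\alpha/2,2+\alpha}_{t,x}(\overline I\times\mt)$ with bounds depending only on $\|\bfu\|_{C(\overline I;X_m)}$, $\|\mathbb Q\,\partial_t\bfW\|_{C(\overline I;W^{2,\infty})}$ and $\|\varrho_{0,\delta}\|_{C^{2,\alpha}}$. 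The parabolic maximum principle applied to $\underline{\varrho}(t)=\varrho_{\min}(0)\exp(-t\|\Div\bfu\|_\infty)$ and analogously from above gives
$$
0<\underline{\varrho}(t)\leq S(\bfu)(t,x)\leq \overline{\varrho}(t)\qquad \text{on } \overline I\times\mt,
$$
so the density stays strictly positive and bounded, and the map $\bfu\mapsto S(\bfu)$ is continuous (indeed Lipschitz on bounded sets) from $C(\overline I;X_m)$ to $C(\overline I;C^2(\mt))$ by standard energy estimates on the difference of two solutions.

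Next I would project the momentum equation \eqref{eq:mom:m} onto $X_m$. Writing $\bfu(t)=\sum_{n=1}^m c_n(t)\bfpsi_n$, testing \eqref{eq:mom:m} differentiated in $t$ against $\bfpsi_k$, and using $\varrho=S(\bfu)$, one obtains the finite-dimensional system
\begin{equation*}
\frac{\dd}{\dt}\Bigl[M\bigl(S(\bfu)\bigr)(t)\,c(t)\Bigr]=F\bigl(t,c(t),S(\bfu)\bigr),
\end{equation*}
where $M_{kn}(t):=\int_{\mt}S(\bfu)\,\bfpsi_n\cdot\bfpsi_k\dx$ is symmetric and, by the pointwise lower bound $S(\bfu)\geq\underline{\varrho}(t)>0$, uniformly positive definite with smooth time dependence; $F$ collects the convective, viscous, pressure, $\varepsilon$--regularization, and noise terms and is locally Lipschitz in $c$ as long as $S(\bfu)$ remains in the bounds above. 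I would then apply the Schauder fixed point theorem to the map
$$
\mathcal T:C(\overline I_\tau;X_m)\to C(\overline I_\tau;X_m),\qquad \bfu\mapsto \tilde\bfu,
$$
where $\tilde\bfu$ is the solution of the ODE above with $\varrho=S(\bfu)$, on a short interval $I_\tau=[0,\tau]$: continuity is inherited from continuity of $S$ and of the ODE flow; compactness follows from the bound $\|\partial_t\tilde\bfu\|_{C(\overline I_\tau;X_m)}\le C$ by finite dimensionality of $X_m$ and equivalence of norms.

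Finally, to extend the local solution to the whole interval $\overline I$ I would rely on the energy identity obtained by testing the Galerkin momentum equation with $\bfu$ itself, combined with the renormalized continuity equation applied to $P(\varrho)+\tfrac{\delta}{\beta-1}\varrho^\beta$, as in Section~\ref{EE}. Thanks to $\Div\mathbb Q=0$ the transport noise terms cancel exactly in the energy balance (this is the very reason of our noise assumption), so one recovers an $m$-independent estimate
$$
\sup_{t\in\overline I}\Bigl(\|\bfu(t)\|_{L^2(\mt)}^2+\|\varrho(t)\|_{L^\gamma\cap L^\beta}\Bigr)+\int_I\|\nabla_x\bfu\|_{L^2}^2\dt\le C,
$$
which on $X_m$ is equivalent to a bound on $|c(t)|$; combined with the parabolic bounds for $S(\bfu)$, this prevents blow-up and allows iteration up to time $T$, producing the required solution. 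The only step that is genuinely delicate is the verification that the mass matrix $M(\bfu)$ remains invertible under iteration; this is precisely where the strict lower bound on $\varrho_0$ and the maximum principle for the parabolic continuity equation are essential, and it is the reason why both the positivity of the initial density and the $\varepsilon\Delta_x\varrho$ regularization are assumed at this level.
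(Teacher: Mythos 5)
Your proposal is correct and follows essentially the same route as the paper: the paper does not spell out a proof of this proposition, but explicitly defers to \cite[Chapter 7]{EF70}, observing that the Galerkin-level system is the classical one with the convective velocity replaced by $\bfu-\mathbb Q\,\partial_t\bfW$, which is exactly the reformulation you make using $\Div\mathbb Q=0$. Your outline — solve the parabolic continuity equation for fixed $\bfu\in C(\overline I;X_m)$ by Schauder theory and the maximum principle, reduce the momentum equation to an ODE on $X_m$ with an invertible mass matrix $M(S(\bfu))$, close with Schauder's fixed point theorem, and globalize via the energy estimate in which the transport noise cancels — is precisely the adaptation of \cite[Chapter 7]{EF70} that the paper has in mind.
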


\subsection{The viscous approximation}
\label{subsec:eps}
We wish to establish a solution to the following system with artificial viscosity and artificial pressure.
\begin{itemize}
\item The momentum equation holds in the sense that\begin{align}\label{eq:mom:eps}
\begin{aligned}
&-\int_{I}\int_{\mt} \Big(\varrho \bfu\cdot \partial_t \bfphi +\varrho \bfu\otimes \bfu:\nabla_x \bfphi\Big)\dxt
\\
&+\int_I\int_{\mt}\mathbb S(\nabla_x \bfu):\nabla_x \bfphi \dxt-\int_I\int_{\mt}
p_\delta(\varrho)\,\Div \bfphi\dxt-\varepsilon\int_I\int_{\mt}\varrho\bfu\cdot\Delta_x\bfphi\dxt
\\&=\int_{\mt}\bfq_0 \cdot \bfphi(0)\dx-\int_I\int_{\mt}\varrho \bfu\otimes\mathbb Q:\nabla_x \bfphi\,\partial_t \bfW\dxt
\end{aligned}
\end{align} 
for all $\bfphi\in C^\infty(\overline I\times\mt)$ with $\bfphi(T)=0$ and we have $\varrho\bfu(0)=\bfq_0$.
\item The continuity equation holds in the sense that
\begin{align}\label{eq:con:eps}
\begin{aligned}
\int_I\int_{\mt}\Big(\varrho\partial_t\psi
+\varrho \bfu\cdot\nabla\psi\Big)\dxt&=\int_{\mt}\varrho_0 \psi(0)\dx+\varepsilon\int_I\int_{\mt}\nabla\varrho\cdot\nabla\psi\dxt\\&+\int_I\int_{\mt}\varrho \mathbb Q\cdot\nabla_x \psi\,\partial_t \bfW\dxt
\end{aligned}
\end{align}
for all $\psi\in C^\infty(\overline{I}\times\mt)$ with $\psi(T)=0$ and we have $\varrho(0)=\varrho_0$. 
\item  The energy inequality is satisfied in the sense that
\begin{align} \label{eq:ene:eps}
\begin{aligned}
- \int_I \partial_t \psi \,
\mathscr E_\delta \dt&+\int_I\psi\int_{\mt}\mathbb S(\nabla_x \bfu):\nabla_x \bfu\dxs\\&+\varepsilon\int_I\psi\int_{\mt}P_\delta''(\varrho)|\nabla\varrho|^2\dxs \leq
\psi(0) \mathscr E_\delta(0)
\end{aligned}
\end{align}
holds for any $\psi \in C^\infty_c([0, T))$.
Here, we abbreviated
$$\mathscr E_\delta(t)= \int_{\mt}\Big(\frac{1}{2} \varrho(t) |\bfu(t) |^2 + P_\delta(\varrho(t))\Big)\dx$$
and the pressure potential is given by $P_\delta(\varrho)=\frac{a}{\gamma-1}\varrho^\gamma+\frac{\delta}{\beta-1}\varrho^\beta$.
\end{itemize}
\begin{Proposition}\label{thm:eps}
Assume that we have for some $\alpha\in(0,1)$
\begin{align*}
\frac{|\bfq_0|^2}{\varrho_0}&\in L^1(\mt),\ \varrho_0\in C^{2,\alpha}(\mt).
\end{align*}
Furthermore suppose thatt $\mathbb Q$ and $\bfW$ satisfy \eqref{noise1}--\eqref{noise3} and that $\varrho_0$ is strictly positive.
 There is a solution $$(\bfu,\varrho)\in L^2(I;W^{1,2}(\mt))\times C_w(\overline I;L^\beta(\mt))\cap L^2(W^{1,2}(\mt))$$ to \eqref{eq:mom:eps}--\eqref{eq:ene:eps}. 
\end{Proposition}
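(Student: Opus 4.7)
The plan is to pass $m\to\infty$ in the Galerkin system \eqref{eq:mom:m}--\eqref{eq:con:m} furnished by Proposition~\ref{thm:m}, at fixed $\varepsilon,\delta>0$, essentially reproducing the deterministic compressible Navier--Stokes scheme of \cite[Chapter~7]{EF70}. Since $\bfW\in C^1(\overline I,\R^N)$ and $\mathbb{Q}\in W^{2,\infty}_{\mathrm{div}}(\mt)$, the field $\mathbb{Q}\,\partial_t\bfW$ is uniformly smooth and bounded on $I\times\mt$; consequently, the ``noise'' terms are merely smooth bounded advective perturbations, and no stochastic machinery is needed at this stage.

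\textbf{A priori bounds.} Testing \eqref{eq:mom:m} against $\vu_m\in X_m$ and combining with \eqref{eq:con:m} multiplied by $\tfrac12|\vu_m|^2$ and by $P_\delta'(\vr_m)$ reproduces the formal calculation of Section~\ref{EE}. The identity $\Div\mathbb{Q}=0$ is crucial: it turns the noise contributions into a pure spatial divergence that integrates to zero on $\mt$, exactly as in \eqref{e5}--\eqref{e6}. This yields the $m$-uniform bounds $\sqrt{\vr_m}\vu_m\in L^\infty(I;L^2)$, $\nabla\vu_m\in L^2(I;L^2)$, $\vr_m\in L^\infty(I;L^\beta)$ and $\sqrt{\varepsilon}\,\nabla\vr_m^{\beta/2}\in L^2(I;L^2)$. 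Linear parabolic theory applied to \eqref{eq:con:m}, viewed with smooth drift $\vu_m+\mathbb{Q}\partial_t\bfW$, provides strict positivity of $\vr_m$ and additional uniform spatial regularity.

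\textbf{Compactness and limit passage.} From \eqref{eq:con:m}, $\partial_t\vr_m$ is bounded in $L^2(I;W^{-1,q}(\mt))$, and Aubin--Lions gives $\vr_m\to\vr$ strongly in $L^2(I;L^2(\mt))$; interpolation against the $L^\infty_tL^\beta_x$ bound upgrades this to $L^p(I\times\mt)$ for some $p>\beta$. For the momentum, the uniform bound on $\partial_t P_m(\vr_m\vu_m)$ in a negative Sobolev space (obtained from \eqref{eq:mom:m}, all of whose terms are controlled by the previous estimates together with the smoothness of $\mathbb{Q}\partial_t\bfW$), combined with the spatial control of $\vr_m\vu_m$, yields strong convergence of $\vr_m\vu_m$ in $L^2(I;W^{-1,2}(\mt))$. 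With $\vu_m\rightharpoonup\vu$ in $L^2(I;W^{1,2}(\mt))$, this identifies $\vr_m\vu_m\otimes\vu_m\to\vr\vu\otimes\vu$ distributionally. Strong convergence of $\vr_m$ handles the pressure $a\vr_m^\gamma+\delta\vr_m^\beta$ by uniform integrability (using $\beta>\gamma$), and both noise terms pass thanks to strong density convergence multiplied by the smooth bounded factor $\mathbb{Q}\partial_t\bfW$. Test functions $\bfphi\in C^\infty(\overline I\times\mt)$ are accessed via their projection onto $X_m$ and density of $\bigcup_m X_m$. Finally, \eqref{eq:ene:eps} is recovered by weak lower semicontinuity of $\int\mathbb S(\nabla\vu_m):\nabla\vu_m$ and of $\varepsilon\int P_\delta''(\vr_m)|\nabla\vr_m|^2$, combined with strong convergence of $\vr_m$ on the pressure potential and of $\sqrt{\vr_m}\vu_m$ on the kinetic energy.

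\textbf{Main obstacle.} As in every existence theory for compressible flows, the principal technical point is the strong compactness of $\vr_m\vu_m$ through a time-regularity bound in a negative Sobolev space. In the present setting the new term $\Div(\vr_m\vu_m\otimes\mathbb{Q})\,\partial_t\bfW$ has exactly the same spatial structure as the convective term $\Div(\vr_m\vu_m\otimes\vu_m)$, but carries the extra bounded factor $\mathbb{Q}\partial_t\bfW\in W^{1,\infty}(I\times\mt)$; thus the required estimate follows from the same argument as in the purely deterministic case, which is why the proof can be carried out ``essentially the same'' as in \cite[Chapter~7]{EF70}.
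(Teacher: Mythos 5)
Your proposal follows the same route as the paper: Galerkin limit $m\to\infty$ at fixed $\varepsilon,\delta$, using the cancellation coming from $\Div\mathbb Q=0$ in the energy balance, Aubin--Lions for the density, a time-derivative bound for the momentum, and weak lower semicontinuity for the energy inequality. The paper itself is extremely terse on the compactness step and you fill in more detail, which is fine.

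One technical misstatement: you claim strong convergence of $\vr_m$ in $L^p(I\times\mt)$ for some $p>\beta$ by ``interpolation against the $L^\infty_t L^\beta_x$ bound.'' That interpolation only gives $p<\beta$. To reach $p>\beta$ (which you genuinely need for uniform integrability of $\delta\vr_m^\beta$), you must interpolate against the higher integrability coming from the $\varepsilon$-dissipation: at fixed $\varepsilon$ the bound $\|\nabla\vr_m^{\beta/2}\|_{L^2(I\times\mt)}\le c$ together with Sobolev embedding puts $\vr_m$ in $L^\beta(I;L^{N\beta/(N-2)}(\mt))$ (say for $N=3$), and interpolating \emph{this} with $L^\infty_t L^\beta_x$ and the strong $L^2$ convergence yields strong convergence in $L^p(I\times\mt)$ for some $p>\beta$. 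You do list the bound $\sqrt\varepsilon\,\nabla\vr_m^{\beta/2}\in L^2$ in your a priori estimates, so this is a misattribution rather than a missing ingredient; just be explicit that the higher integrability of the density is the mechanism, not the $L^\infty_t L^\beta_x$ bound alone.

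A small further remark: the exact noise cancellation in the kinetic-energy part does not require $\Div\mathbb Q=0$ at all -- the two terms $\int\vr_m\mathbb Q\cdot\nabla\tfrac12|\vu_m|^2\,\partial_t\bfW$ and $\int\vr_m\vu_m\otimes\mathbb Q:\nabla\vu_m\,\partial_t\bfW$ are identical algebraically. The condition $\Div\mathbb Q=0$ is used twice in the pressure-potential step, to pass from $P'(\vr_m)\Div(\vr_m\mathbb Q)$ to $\nabla P(\vr_m)\cdot\mathbb Q$ and then to integrate it to zero. Your formulation blurs these two distinct mechanisms; it does not affect correctness but is worth stating precisely.
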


\begin{proof}
For a given $m\in\mathbb N$ we obtain a solution $(\bfu_m,\varrho_m)$ to \eqref{eq:mom:m}--\eqref{eq:con:m} by Theorem \ref{thm:m}. Testing
\eqref{eq:mom:m} by $\bfu_m$ and \eqref{eq:con:m} by $\frac{1}{2}|\bfu_m|^2$
we have
\begin{align*}
\frac{1}{2}\int_{\mt}\varrho_m|\bfu_m|^2\dx&+\int_0^t\int_{\mt}\mathbb S(\nabla_x\bfu_m):\nabla_x\bfu_m\dxs\\
&=\frac{1}{2}\int_{\mt}\frac{|\bfq_0^m|^2}{\varrho_0^m}\dx+\int_0^tp_\delta(\varrho_m)\,\Div\bfu_m\dxs\\
&+\int_I\int_{\mt}\varrho_m \mathbb Q\cdot\nabla_x \frac{1}{2}|\bfu_m|^2\,\partial_t \bfW\dxt-\int_I\int_{\mt}\varrho_m \bfu_m\otimes\mathbb Q:\nabla_x \bfu_m\,\partial_t \bfW\dxt\\
&=\frac{1}{2}\int_{\mt}\frac{|\bfq_0^m|^2}{\varrho_0^m}\dx+\int_0^tp_\delta(\varrho_m)\,\Div\bfu_m\dxs
\end{align*}
for almost all $0 \leq t \leq T$.
Multiplying  \eqref{eq:con:m} by $P'_\delta(\vr_m)$ we get 
\begin{align*}
	\partial_t P_\delta(\vr_m) + \Div (P_\delta(\vr_m) \vu_m)  + p_\delta(\vr_m) \Div \vu_m  = P'(\vr_m) \Div (\vr_m \mathbb{Q} ) \partial_t \bfW.
\end{align*}
For the last term we have
\begin{align*}
\int_{\mt}P'(\vr_m) \Div (\vr_m \mathbb{Q} ) \partial_t \bfW\dx=\int_{\mt}\nabla_x P(\vr_m) \cdot \mathbb{Q}  \partial_t \bfW\dx=-\int_{\mt}P(\vr_m) \,\Div \mathbb{Q}  \partial_t \bfW\dx=0
\end{align*}
such that we conclude
\begin{align}\label{NTDBkappa}
\begin{aligned}
\frac{1}{2}\int_{\mt}\varrho_m|\bfu_m|^2\dx&+\int_{\mt}P_\delta(\varrho_m)\dx+\int_0^t\int_{\mt}\mathbb S(\nabla_x\bfu_m):\nabla_x\bfu_m\dxs\\
&=\frac{1}{2}\int_{\mt}\frac{|\bfq_0^m|^2}{\varrho_0^m}\dx+\int_{\mt}P_\delta(\varrho_0^m)\dx\leq\,c.
\end{aligned}
\end{align}
We deduce the bounds
\begin{align} 
\label{est:k2}
\sup_{t \in I} \| \varrho_m \|^\beta_{L^\beta(\mt)}  +
 \sup_{t \in I} \| \varrho_m\bfu_m \|_{L^{\frac{2 \beta}{\beta + 1}}(\mt)}^{\frac{2 \beta}{\beta + 1}}  \leq c,\\
 \label{est:k3}
 \| \nabla_x \bfu_m \|^2_{L^2(I\times \mt) }+ \| \nabla_x \varrho_m \|^2_{L^2(I\times\mt) } + \| \nabla_x (\varrho_m)^{\beta/2} \|^2_{L^2(I\times\mt) } \leq c.
\end{align}

Passing to a subsequence we obtain
\begin{align}
\bfu_m&\rightharpoonup \bfu\quad\text{in}\quad L^2(I;W^{1,2}(\mt)),\label{kappaeq:convu1}\\
\varrho_m&\rightharpoonup^{\ast}\varrho\quad\text{in}\quad L^\infty(I;L^\beta(\mt)),\label{kappaeq:convrho1}\\
\varrho_m&\rightharpoonup\varrho\quad\text{in}\quad L^2(I;W^{1,2}(\mt))\label{kappaeq:convrho2}.
\end{align}
Since \eqref{eq:con:m} and \eqref{eq:mom:m} yield compactness of $\varrho_m$ and $\varrho_m\bfu_m$, is is straightforward to pass to the limit in \eqref{eq:con:m} and \eqref{eq:mom:m}. We obtain \eqref{eq:con:eps} and \eqref{eq:mom:eps}.
 Similarly, we can multiply \eqref{NTDBkappa} by a smooth temporal test-function
and use lower-semi continuity to obtain \eqref{eq:ene:eps}.
 \end{proof}

\subsection{The vanishing viscosity limit}
\label{subsec:del}
We wish to establish the existence of a weak solution $(\bfu,\varrho)$ to the system with artificial pressure in the following sense: 
\begin{itemize}
\item\label{D1} The momentum equation holds in the sense that\begin{align}\label{eq:mom:delta}
\begin{aligned}
&-\int_I\int_{\mt} \Big(\varrho \bfu\cdot \partial_t \bfphi +\varrho \bfu\otimes \bfu:\nabla_x \bfphi\Big)\dxt
\\
&+\int_I\int_{\mt}\mathbb S(\nabla_x \bfu):\nabla_x \bfphi \dxt-\int_I\int_{\mt}
p_\delta(\varrho)\,\Div \bfphi \dxt
\\&=\int_{\mt}\bfq_0 \cdot \bfphi(0)\dx-\int_I\int_{\mt}\varrho \bfu\otimes\mathbb Q:\nabla_x \bfphi\,\partial_t \bfW\dxt
\end{aligned}
\end{align} 
for all $\bfphi\in C^\infty(\overline I\times\mt)$ with $\bfphi(T)=0$ and we have $\varrho\bfu(0)=\bfq_0$;
\item\label{D2}  The continuity equation holds in the sense that
\begin{align}\label{eq:con:delta}
\begin{aligned}
-\int_I\int_{\mt}\Big(\varrho\partial_t\psi
+\varrho \bfu\cdot\nabla\psi\Big)\dxt&=\int_{\mt}\varrho_0 \psi(0)\dxt\\&+\int_I\int_{\mt}\varrho \mathbb Q\cdot\nabla_x \psi\,\partial_t \bfW\dxt
\end{aligned}
\end{align}
for all $\psi\in C^\infty(\overline{I}\times\Omega)$ with $\psi(T)=0$ and we have $\varrho(0)=\varrho_0$. 
\item \label{D3} The energy inequality is satisfied in the sense that
\begin{align} \label{eq:ene:delta}
\begin{aligned}
- \int_I &\partial_t \psi \,
\mathscr E_\delta \dt+\int_I\psi\int_{\mt}\mathbb S(\nabla_x \bfu):\nabla_x \bfu\dxt\leq
\psi(0) \mathscr E_\delta(0)
\end{aligned}
\end{align}
holds for any $\psi \in C^\infty_c([0, T))$.
Here, we abbreviated
$$\mathscr E_\delta(t)= \int_{\Omega(t)}\Big(\frac{1}{2} \varrho(t) | {v}(t) |^2 + P_\delta(\varrho(t))\Big)\dx$$
and the pressure potential is given by $P_\delta(\varrho)=\frac{a}{\gamma-1}\varrho^\gamma+\frac{\delta}{\beta-1}\varrho^\beta$.
\end{itemize}

\begin{Proposition}\label{thm:delta}
Assume that we have for some $\alpha\in(0,1)$
\begin{align*}
\frac{|\bfq_0|^2}{\varrho_0}&\in L^1(\mt),\ \varrho_0\in C^{2,\alpha}(\mt).
\end{align*}
Furthermore suppose that $\mathbb Q$ and $\bfW$ satisfy \eqref{noise1}--\eqref{noise3} and that $\varrho_0$ is strictly positive.
 There is a solution $$(\bfu,\varrho)\in L^2(I;W^{1,2}(\mt))\times C_w(\overline I;L^\beta(\mt))$$ to \eqref{eq:mom:delta}--\eqref{eq:ene:delta}.
\end{Proposition}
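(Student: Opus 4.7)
The plan is to pass to the limit $\varepsilon\to 0$ in the family $(\bfu_\varepsilon,\varrho_\varepsilon)$ obtained in Proposition \ref{thm:eps}, following the strategy of \cite[Chapter 7]{EF70} and emphasizing the modifications caused by the transport noise. First, from the energy inequality \eqref{eq:ene:eps}, together with the conservation of mass, I extract the uniform bounds $\sup_{t}\|\varrho_\varepsilon\|_{L^\beta(\mt)}^\beta+\sup_t\|\sqrt{\varrho_\varepsilon}\bfu_\varepsilon\|_{L^2(\mt)}^2+\|\nabla_x\bfu_\varepsilon\|_{L^2(I\times\mt)}^2\leq c$ and the vanishing diffusion of the density $\varepsilon\int_I\int_\mt P_\delta''(\varrho_\varepsilon)|\nabla_x\varrho_\varepsilon|^2\dxt\leq c$. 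The total mass is preserved because $\Div\mathbb Q=0$ by \eqref{e6}. These give weak-$*$ compactness of $\varrho_\varepsilon$ in $L^\infty(I;L^\beta(\mt))$, weak compactness of $\bfu_\varepsilon$ in $L^2(I;W^{1,2}(\mt))$, strong compactness of $\varrho_\varepsilon$ in $C_w(\overline I;L^\beta(\mt))$ (via Aubin--Lions applied to the continuity equation, which is tractable because $\partial_t\bfW$ is bounded) and of $\varrho_\varepsilon\bfu_\varepsilon$ in $C_w(\overline I;L^{2\beta/(\beta+1)}(\mt))$. The latter, combined with the Sobolev compactness of the velocity, yields $\varrho_\varepsilon\bfu_\varepsilon\otimes\bfu_\varepsilon\rightharpoonup\varrho\bfu\otimes\bfu$ in the sense of distributions.

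Next, I would prove higher integrability of the pressure \eqref{eq:higherpressure} by testing the momentum equation \eqref{eq:mom:eps} against the Bogovskii-type test function $\bfphi=\psi\,\nabla_x\Delta_x^{-1}\bigl(\varrho_\varepsilon^{\Theta}-\langle\varrho_\varepsilon^{\Theta}\rangle\bigr)$ with $\psi\in C^\infty_c(I)$ and $\Theta<\frac{2}{N}\gamma-1$. All resulting terms are estimated uniformly in $\varepsilon$ using the energy bounds; the only new contribution compared with the deterministic case is the transport-noise term $\int\varrho_\varepsilon\bfu_\varepsilon\otimes\mathbb Q:\nabla_x\bfphi\,\partial_t\bfW\dxt$, which is linear in $\nabla_x\bfphi$ and dominated by $\|\partial_t\bfW\|_\infty\|\mathbb Q\|_\infty$ times the already controlled norms of $\varrho_\varepsilon\bfu_\varepsilon$. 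When $\mathbb Q$ is $x$-independent, this term involves only $\mathbb Q\cdot\nabla_x\bfphi$ and can be absorbed in the standard estimate, yielding the $\mathbb Q$-independent bound stated in the theorem.

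The main obstacle is the strong convergence $\varrho_\varepsilon\to\varrho$ in $L^1(I\times\mt)$, needed to pass to the limit in $p_\delta(\varrho_\varepsilon)$. I would follow the two-step scheme of Lions and Feireisl. Step one: derive the effective viscous flux identity
\[
\lim_{\varepsilon\to 0}\int_I\int_\mt\psi\,\bigl(p_\delta(\varrho_\varepsilon)-(2\mu+\eta)\Div\bfu_\varepsilon\bigr)\varrho_\varepsilon\dxt=\int_I\int_\mt\psi\,\bigl(\overline{p_\delta(\varrho)}-(2\mu+\eta)\Div\bfu\bigr)\varrho\dxt
\]
for $\psi\in C^\infty_c(I\times\mt)$, obtained by testing \eqref{eq:mom:eps} against $\psi\,\nabla_x\Delta_x^{-1}\varrho_\varepsilon$ and comparing with the analogous expression at the limit using $\psi\,\nabla_x\Delta_x^{-1}\varrho$. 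The commutator analysis à la Feireisl--Lions goes through because the $\varepsilon$-terms in the continuity equation \eqref{eq:con:eps} disappear (indeed $\varepsilon\nabla_x\varrho_\varepsilon\to 0$ in $L^2$). The delicate new contribution comes from the noise term in \eqref{eq:mom:eps}: when tested by $\psi\,\nabla_x\Delta_x^{-1}\varrho_\varepsilon$, it produces
\[
\int_I\int_\mt\varrho_\varepsilon\bfu_\varepsilon\otimes\mathbb Q:\nabla_x\bigl(\psi\nabla_x\Delta_x^{-1}\varrho_\varepsilon\bigr)\partial_t\bfW\dxt,
\]
whose limit must match the corresponding expression with $\varrho_\varepsilon$ replaced by $\varrho$. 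Since $\varrho_\varepsilon\to\varrho$ in $C_w(\overline I;L^\beta)$ and therefore in $L^2(I;W^{-1,2}(\mt))$ by compact embedding, while $\nabla_x\Delta_x^{-1}\varrho_\varepsilon\to\nabla_x\Delta_x^{-1}\varrho$ strongly in $L^2(I;L^2(\mt))$, the passage to the limit is routine once one expands the test function; no double Riesz commutator of the Div--Curl type is created on the Stokes side beyond the classical one.

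Step two: use the renormalized continuity equation \eqref{eq:ren} for $\varrho_\varepsilon$, which is available because the noise term integrates by parts to zero thanks to $\Div\mathbb Q=0$, pass to the limit to obtain the renormalized equation in the limit with the defect $\overline{b(\varrho)}-b(\varrho)$, and combine it with the effective viscous flux identity and the monotonicity of $p_\delta$ to infer $\overline{\varrho\log\varrho}=\varrho\log\varrho$ a.e., hence $\varrho_\varepsilon\to\varrho$ in $L^1(I\times\mt)$. Finally, the energy inequality \eqref{eq:ene:delta} follows by passing to the limit in \eqref{eq:ene:eps} using weak lower semicontinuity, and the initial condition $\varrho\bfu(0)=\bfq_0$ is preserved by the weak continuity in time. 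The principal technical hurdle throughout is keeping track of the extra noise-induced terms in the Bogovskii and effective-viscous-flux test function computations, but the smoothness of $\bfW$ and the solenoidality of $\mathbb Q$ ensure that no uncontrollable contributions arise.
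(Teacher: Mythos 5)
There is a genuine gap in your argument, and it concerns precisely the novelty of the present paper. In both the higher-integrability estimate and the effective viscous flux identity, you test the momentum equation against $\bfphi = \psi\,\nabla_x\Delta_x^{-1}\varrho_\varepsilon$ (or its $\Theta$-power analogue), a test function whose time derivative is governed by the \emph{stochastic} continuity equation \eqref{eq:con:eps}. Because of the noise in \eqref{eq:con:eps}, there are \emph{two} new terms compared with the deterministic theory, not one: besides the momentum noise
\[
\int_I\int_\mt \varrho_\varepsilon\bfu_\varepsilon\otimes\mathbb Q:\nabla_x\bigl(\psi\,\nabla_x\Delta_x^{-1}\varrho_\varepsilon\bigr)\,\partial_t\bfW\dxt
\]
that you list, the term $\partial_t\bfphi$ contributes
\[
\int_I\int_\mt \varrho_\varepsilon\bfu_\varepsilon\cdot\nabla_x\Delta_x^{-1}\Div(\varrho_\varepsilon\mathbb Q)\,\partial_t\bfW\dxt,
\]
which you omit. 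Your strong-convergence argument ($\varrho_\varepsilon\to\varrho$ in $L^2(I;W^{-1,2})$, hence $\nabla_x\Delta_x^{-1}\varrho_\varepsilon\to\nabla_x\Delta_x^{-1}\varrho$ in $L^2(I;L^2)$) handles the first term, but it does not apply to the second: the zeroth-order operator $\nabla_x\Delta_x^{-1}\Div$ gains no regularity, so $\nabla_x\Delta_x^{-1}\Div(\varrho_\varepsilon\mathbb Q)$ converges only weakly in $L^\infty(L^\beta)$, and its product with the weakly converging $\varrho_\varepsilon\bfu_\varepsilon$ does not pass to the limit by a weak-strong pairing. Your statement that ``no double Riesz commutator of the Div--Curl type is created ... beyond the classical one'' is exactly the opposite of what the paper relies on.

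The paper's resolution is to add the two noise terms and recognize the result as a commutator,
\[
J_5=\int_I\int_\mt u^i_\varepsilon\Bigl(\varrho_\varepsilon\,\partial_{x_j}\Delta_x^{-1}\partial_{x_i}[\varrho_\varepsilon Q_{j,k}]-\varrho_\varepsilon Q_{j,k}\,\partial_{x_j}\Delta_x^{-1}\partial_{x_i}[\varrho_\varepsilon]\Bigr)\partial_t W_k\dxs,
\]
after the total-divergence part drops out. This commutator gains one derivative and is handled by the div-curl lemma (version of \cite[Lemma 3.4]{feireisl1}); it is also what produces the cancellation for constant $\mathbb Q$ that underlies the rough-path section. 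Without this step, neither the uniform pressure estimate at the $\varepsilon$-level (for the case of $x$-dependent $\mathbb Q$ treated here, where the commutator structure gives the right integrability with $\beta\geq 4$) nor the passage to the limit in the flux identity closes. A smaller point: at the $\varepsilon$-level the paper tests directly with $\nabla_x\Delta_x^{-1}\varrho_\varepsilon$ since the artificial pressure already gives $L^\infty(L^\beta)$ with $\beta\geq 4$; the $\Theta$-power test function you cite is the one used later in the $\delta\to 0$ limit of Section~\ref{sec:6}, where only $L^\gamma$ is available.
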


\begin{Lemma}
\label{cor:ap1}
Under the assumptions of Proposition \ref{thm:delta} the continuity equation holds in the renormalized sense, that is we have
\begin{align}\label{eq:ren:delta}
\begin{aligned}
-\int_I\int_{\mt}\Big(\theta(\varrho)\partial_t\psi
+\theta(\varrho) \bfu\cdot\nabla\psi\Big)\dxt&=\int_{\mt}\theta(\varrho_0) \psi(0)\dxt\\
&-\int_I\int_{\mt}\big(\theta(\varrho)-\theta'(\varrho)\varrho\big)\,\Div\bfu\psi\dxt
\\&+\int_I\int_{\mt}\theta(\varrho) \mathbb Q\cdot\nabla_x \psi\,\partial_t \bfW\dxt
\end{aligned}
\end{align} 
 for all $\psi\in C^\infty(\overline I \times \mt)$ with $\psi(T)=0$ and all $\theta\in C^1([0,\infty))$ with $\theta'(z)=0$ for all $z\geq M_\theta$.
\end{Lemma}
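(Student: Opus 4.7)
My plan is to run the classical DiPerna--Lions renormalization argument by spatial mollification, with a small adaptation to accommodate the transport-noise term. Let $\omega_\kappa$ be a standard symmetric mollifier on $\mt$ and denote $[\cdot]_\kappa := (\cdot) * \omega_\kappa$ for spatial convolution. Because \eqref{eq:con:delta} holds distributionally on $I \times \mt$ and $\partial_t \bfW$ depends only on time, convolving in $x$ yields the pointwise-in-$x$ identity
\begin{align*}
\partial_t [\varrho]_\kappa + \Div([\varrho]_\kappa \bfu) = r_\kappa + \Div([\varrho]_\kappa \mathbb Q) \cdot \partial_t \bfW + s_\kappa \cdot \partial_t \bfW,
\end{align*}
where $r_\kappa := \Div([\varrho]_\kappa \bfu) - [\Div(\varrho \bfu)]_\kappa$ and $s_\kappa := [\Div(\varrho \mathbb Q)]_\kappa - \Div([\varrho]_\kappa \mathbb Q)$ are the standard Friedrichs commutators. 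With $[\varrho]_\kappa$ now smooth in $x$, I multiply by $\theta'([\varrho]_\kappa)$ and apply the chain rule to the transport term as in the deterministic theory, producing $\Div(\theta([\varrho]_\kappa)\bfu) + (\theta'([\varrho]_\kappa)[\varrho]_\kappa - \theta([\varrho]_\kappa))\Div\bfu$ on the left-hand side.

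The step specific to transport noise is the identity
$$\theta'([\varrho]_\kappa)\Div([\varrho]_\kappa \mathbb Q) = \theta'([\varrho]_\kappa)\, \mathbb Q \cdot \nabla [\varrho]_\kappa = \mathbb Q \cdot \nabla \theta([\varrho]_\kappa) = \Div\bigl(\theta([\varrho]_\kappa) \mathbb Q\bigr),$$
where both non-trivial equalities use the hypothesis $\Div \mathbb Q = 0$ built into $Q_k \in W^{2,\infty}_{\operatorname{div}}(\mt;\R^N)$. Thus the renormalized noise is already in conservative form, and after multiplication by a test function $\psi$ and integration by parts it produces precisely the term $\int_I \int_\mt \theta(\varrho)\, \mathbb Q \cdot \nabla \psi \, \partial_t \bfW \dxt$ required on the right-hand side of \eqref{eq:ren:delta}.

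To send $\kappa \to 0$, I invoke the Friedrichs commutator lemma: from $\varrho \in L^\infty(I; L^\beta(\mt))$ with $\beta > 4$ and $\bfu \in L^2(I; W^{1,2}(\mt))$, I get $r_\kappa \to 0$ in $L^r_{loc}(I \times \mt)$ for some $r > 1$, and the analogous commutator with $\mathbb Q \in W^{1,\infty}$ gives $s_\kappa \to 0$ in $L^\beta(I \times \mt)$. Since $\theta'$ vanishes on $[M_\theta, \infty)$, the factors $\theta'([\varrho]_\kappa)$ and $\theta'([\varrho]_\kappa)[\varrho]_\kappa$ are uniformly bounded, so the commutator contributions $\theta'([\varrho]_\kappa)r_\kappa$ and $\theta'([\varrho]_\kappa) s_\kappa\cdot\partial_t\bfW$ vanish in $L^1$; dominated convergence handles the other terms together with $[\varrho]_\kappa \to \varrho$ in $L^p$ for any $p < \beta$, and $[\varrho_0]_\kappa \to \varrho_0$ takes care of the initial datum. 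The main obstacle -- and the key structural input that dissolves it -- is the divergence-free assumption $\Div \mathbb Q = 0$: without it one would be left with a non-conservative residue of the form $\theta([\varrho]_\kappa) \Div \mathbb Q \cdot \partial_t \bfW$ that cannot be rewritten into the weak form required in \eqref{eq:ren:delta}.
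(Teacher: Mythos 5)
Your proof is correct and takes essentially the same route as the paper: both mollify in space, invoke the Friedrichs/DiPerna--Lions commutator estimates for the transport and noise commutators $r_\kappa, s_\kappa$, and exploit $\Div\mathbb Q=0$ to put the renormalized noise term in the conservative form $\Div(\theta([\varrho]_\kappa)\mathbb Q)$ before passing $\kappa\to 0$. (One minor inaccuracy in your closing aside: without $\Div\mathbb Q=0$ the leftover residue would be $\big(\theta'([\varrho]_\kappa)[\varrho]_\kappa-\theta([\varrho]_\kappa)\big)\Div\mathbb Q\cdot\partial_t\bfW$ rather than $\theta([\varrho]_\kappa)\Div\mathbb Q\cdot\partial_t\bfW$, but this does not affect the proof under the standing hypothesis.)
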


For a given $\varepsilon$ we obtain a solution $(\bfu_\varepsilon,\varrho_\varepsilon)$ to \eqref{eq:mom:eps}--\eqref{eq:ene:eps} by Theorem \ref{thm:eps}.
In particular, we have
\begin{align} 
\begin{aligned}
\frac{1}{2}\int_{\mt}\varrho_\varepsilon|\bfu_\varepsilon|^2\dx&\int_{\mt}\Big(\frac{a}{\gamma-1}\vr_\varepsilon^\gamma+\frac{\delta}{\beta-1}\vr_\varepsilon^\beta\Big)\dx+\int_0^t\int_{\mt}\mathbb S(\nabla_x\bfu_\varepsilon):\nabla_x\bfu_\varepsilon\dxs
\\&\leq\frac{1}{2}\int_{\mt}\frac{|\bfq_0|^2}{\varrho_0}\dx+\int_{\mt}\Big(\frac{a}{\gamma-1}\vr_0^\gamma+\frac{\delta}{\beta-1}\vr_0^\beta\Big)\dx
\end{aligned}
\label{NTDB}
\end{align}
for any $0 \leq t \leq T$.
We deduce the bounds
\begin{align} 
\label{Nbv2}
\sup_{t \in I} \| \varrho_\varepsilon \|^\beta_{L^\beta(\mt)}  +
 \sup_{t \in I} \| \varrho_\varepsilon \bfu_\varepsilon \|_{L^{\frac{2 \beta}{\beta + 1}}(\mt)}^{\frac{2 \beta}{\beta + 1}}  \leq c,\\
 \label{Nbv4}
 \| \nabla_x \bfu_\varepsilon \|^2_{L^2(I\times \mt) }+ \varepsilon\| \nabla_x (\varrho_\varepsilon)^{\beta/2} \|^2_{L^2(I\times\mt) } \leq c.
\end{align}
Finally, we deduce from the equation of continuity (\ref{eq:con:eps}) (one easily proves that $\varrho_\varepsilon$ is and admissible test function by parabolic maximum regularity theory) that
\begin{equation} \label{Nbv6}
\int_{\mt} \varrho_\varepsilon(t, \cdot) \dx = \int_{\mt} \varrho_0 \dx,\
 \| \sqrt{\varepsilon} \nabla_x \varrho_\varepsilon \|_{L^2(I\times\mt)}  \leq c.
\end{equation}
Note that all estimates are independent of $\varepsilon$.
Hence, we may take a subsequence such that
\begin{align}
\bfu_\varepsilon&\rightharpoonup \bfu\quad\text{in}\quad L^2(I;W^{1,2}(\mt)),\label{eq:convu1}\\
\varrho_\varepsilon&\rightharpoonup^{\ast}\varrho\quad\text{in}\quad L^\infty(I;L^\beta(\mt)),\label{eq:convrho1}\\
\varepsilon\nabla\varrho_\varepsilon&\rightarrow 0\quad\text{in}\quad L^2(I\times\mt).\label{eq:van1}
\end{align}
We observe that the a priori estimates \eqref{Nbv2} imply uniform bounds of
$\varrho_\varepsilon \bfu_\varepsilon$ in $ L^\infty(I,L^\frac{2\beta}{\beta+1}(\mt))$. Therefore, we may obtain using \eqref{eq:con:eps} in conjunction with \eqref{eq:convu1}
\begin{align}
\varrho_\varepsilon \bfu_\varepsilon&\rightharpoonup {\varrho}  \bfu\qquad\text{in}\qquad L^q(I, L^a(\mt)),\label{conv:rhov2}
\end{align}
where $a\in (1,\frac{2\beta}{\beta+1})$ and $q\in (1,2)$.
Similarly, we can use \eqref{eq:mom:eps} to conclude
\begin{align}
{\varrho}_\varepsilon  \bfu_\varepsilon\otimes \bfu_\varepsilon&\rightharpoonup  {\varrho}  \bfu\otimes  \bfu\quad\text{in}\quad L^1(I\times\mt).\label{conv:rhovv2}
\end{align} 
At this stage of the proof the pressure is only bounded in $L^1$, so we have to exclude its concentrations. We are going to prove that
\begin{equation}\label{eq:gamma+1}
\int_{I\times \mt}p_\delta(\varrho_\varepsilon)\varrho_{\varepsilon}\,\dd x\,\dd t\leq c
\end{equation}
with a constant independent of $\varepsilon$.
In order to verify \eqref{eq:gamma+1} we test the momentum equation \eqref{eq:mom:eps} with $\nabla_x\Delta^{-1}_x\varrho_\varepsilon$.
In order to deal with the term involving the time derivative we use the continuity equation \eqref{eq:con:eps}. It holds
$$\partial_t\nabla_x\Delta^{-1}_x\varrho_\varepsilon=\nabla\Delta^{-1}_x\Div(\varrho_\varepsilon \bfu_\varepsilon+\varepsilon\nabla_x\varrho_\varepsilon )+\nabla_x\Delta^{-1}_x\Div(\varrho_\varepsilon\mathbb Q\partial_t\bfW).$$ 
such that we obtain
\begin{align}\label{eq:testtheta}
\begin{aligned}
J_0&:=\int_I\int_ {\mt} p_\delta(\vr_\varepsilon)\varrho_\varepsilon\dxs
\\
&=\mu\int_I\int_ {\mt}\mathbb S( \nabla_x \bfu_\varepsilon):\nabla^2_x\Delta^{-1}_{x} \varrho_\varepsilon\dxs
-\int_I\int_ {\mt}  \varrho_\varepsilon \bfu_\varepsilon\otimes \bfu_\varepsilon:\nabla_x^2\Delta^{-1}_{x}\varrho_\varepsilon\dxs
\\
&+\varepsilon\int_I\int_{\mt}\nabla(\bfu_\varepsilon\varrho_\varepsilon):\nabla^2_x\Delta^{-1}_{x}\varrho_\varepsilon\dxs+\int_I\int_{\mt}\varrho_\varepsilon\bfu_\varepsilon\nabla\Delta^{-1}_{x}\Div(\varrho_\varepsilon\bfu_\varepsilon+\varepsilon \nabla_x \varrho_\epsilon)\dxs
\\
& +\int_I\int_ {\mt} \Big[  \Del^{-1} \Grad [\vr_\varepsilon] \cdot \Div (\vr_\varepsilon \vu_\varepsilon \otimes \mathbb{Q} ) + 	\vr_\varepsilon \vu_\varepsilon \cdot \Del^{-1} \Grad  \Div (\vr_\varepsilon \mathbb{Q} ) \Big] \partial_t \bfW\dxs
\\=&:J_1+\cdots +J_{5}.
\end{aligned}
\end{align}
Based on \eqref{eq:convu1}--\eqref{eq:van1} it is well-known how to estimate the terms $J_1-J_4$.
As far as $J_5$ is concerned we rewrite
\begin{align}
\nonumber
J_5
	&= \int_I\int_ {\mt}\partial_{x_j} \Big[ \Del^{-1} \partial_{x_i} [\vr_\varepsilon] \vr_\varepsilon u^i_\varepsilon Q_{j,k} \Big] \partial_t W_k\dxs \\ 
	\nonumber&+ \int_I\int_ {\mt}\left[ u^i_\varepsilon \Big( \vr \partial_{x_j} \Del^{-1} \partial_{x_i} [ \vr_\varepsilon Q_{j,k}] - \vr_\varepsilon Q_{j,k} \partial_{x_j} \Del^{-1} \partial_{x_i} [\vr_\varepsilon] \Big) \right] \partial_t W_k\dxs\\
	&= \int_I\int_ {\mt}\left[ u^i_\varepsilon \Big( \vr_\varepsilon \partial_{x_j} \Del^{-1} \partial_{x_i} [ \vr_\varepsilon Q_{j,k}] - \vr_\varepsilon Q_{j,k} \partial_{x_j} \Del^{-1} \partial_{x_i} [\vr_\varepsilon] \Big) \right] \partial_t W_k\dxs.
	\label{P2}
\end{align}
We obtain by continuity of $\partial_{x_j} \Del^{-1} \partial_{x_i}$
\begin{align*}
|J_5|\leq\|\bfu_\varepsilon\|_{L^2(I;L^2(\mt))}\|\vr_\varepsilon\|_{L^2(I;L^4(\mt))}^2,
\end{align*}
which is uniformly bounded by \eqref{Nbv2} and \eqref{Nbv4} as long as $\beta\geq 4$. This finishes the proof of
\eqref{eq:gamma+1} and we conclude there exists a function $\overline p$ such that
\[
p_\delta(\varrho_\varepsilon)\rightharpoonup \overline p\text{ in }L^1(I;L^1(\mt)),
\]
at least for a subsequence.
Combining this with the convergences \eqref{eq:convu1}--\eqref{conv:rhovv2} we can pass to the limit in \eqref{eq:mom:eps} and \eqref{eq:con:eps} and obtain the continuity equation 
\begin{align}\label{eq:apvarrho}
\begin{aligned}
-\int_I\int_{\mt}\Big(\varrho\partial_t\psi
+\varrho \bfu\cdot\nabla\psi\Big)\dxt&=\int_{\mt}\varrho_0 \psi(0)\dxt\\&+\int_I\int_{\mt}\varrho \mathbb Q\cdot\nabla_x \psi\,\partial_t \bfW\dxt
\end{aligned}
\end{align}
for all $\psi\in C^\infty(\overline I \times \mt)$ with $\psi(T)=0$ and the momentum equation
\begin{align}\label{eq:apulim}
\begin{aligned}
&-\int_I\int_{\mt} \Big(\varrho \bfu\cdot \partial_t \bfphi +\varrho \bfu\otimes \bfu:\nabla_x \bfphi\Big)\dxt
\\
&+\int_I\int_{\mt}\mathbb S(\nabla_x \bfu):\nabla_x \bfphi \dxt-\int_I\int_{\mt}
\overline p\,\Div \bfphi \dxt
\\&=\int_{\mt}\bfq_0 \cdot \bfphi(0)\dx-\int_I\int_{\mt}\varrho \bfu\otimes\mathbb Q:\nabla_x \bfphi\,\partial_t \bfW\dxt
\end{aligned}
\end{align} 
 for all $\bfphi\in C^\infty(\overline I \times \mt)$ with $\bfphi(T)=0$.
It remains to show strong convergence of $\varrho_\varepsilon$.
The proof of strong convergence of the density is based on the effective viscous flux identity introduced in \cite{Li2} and the concept of renormalized solutions from \cite{DL}. We aim to prove that
\begin{align}\label{eq:fluxpsi}
\begin{aligned}
\int_{I\times \mt} &\big(p_\delta(\varrho_\varepsilon)-(\lambda+2\mu)\Div \bfu_\varepsilon\big)\,\varrho_\varepsilon\dxt\\&\longrightarrow\int_{I\times\mt} \big( \overline{p}-(\lambda+2\mu)\Div \bfu\big)\,\varrho\dxt
\end{aligned}
\end{align}
as $\varepsilon\rightarrow0$. This is based on rearranging the terms in \eqref{eq:testtheta} and passing to the limit in
the corresponding terms on the right-hand side (the term which one obtains when testing \eqref{eq:apulim} with $\nabla_x\Delta_x^{-1}\varrho$). This is well-known for all terms except for $J_5$. However, we can use the commutator structure in $J_5$ from \eqref{P2}. By div-curl lemma (in the version of \cite[Lemma 3.4]{feireisl1}) the convergences \eqref{eq:convu1}--\eqref{eq:convrho} are sufficient to infer
that $J_5$ converges to its expected counterpart.
This concludes the proof of \eqref{eq:fluxpsi}.\\

In order to proceed we need to derive the renormalized equation of continuity. We apply a spatial mollification with radius $\kappa\ll1$ to \eqref{eq:apvarrho} and obtain
\begin{align*}
\partial_t(\varrho)_\kappa+\Div((\varrho)_\kappa\bfu)&=\Div((\varrho)_\kappa\mathbb Q)\partial_t\bfW+\bfr_\kappa^1+\bfr_\kappa^2,\\
\bfr_\kappa^1&=\Div\big((\varrho)_\kappa\bfu-(\varrho\bfu)_\kappa\big),\\
\bfr_\kappa^2&=\Div\big((\varrho)_\kappa\mathbb Q-(\varrho\mathbb Q)_\kappa\big)\partial_t\bfW,
\end{align*}
in $I\times\mt$. Here we have
\begin{align*}
\|\bfr_\kappa^1\|_{L^q(\mt)}&\leq\,\|\bfu\|_{W^{1,2}(\mt)}\|\vr\|_{L^{\beta+1}(\mt)},\quad\frac{1}{q}=\frac{1}{2}+\frac{1}{\beta+1},\\
\|\bfr_\kappa^2\|_{L^q(\mt)}&\leq\,\|\mathbb Q\partial_t\bfW\|_{W^{1,2}(\mt)}\|\vr\|_{L^{\beta+1}(\mt)},
\end{align*}
as well as $\bfr_\kappa^1,\bfr_\kappa^2\rightarrow0$ in $L^1(\mt)$ for a.a. $t$. Hence we have $\bfr_\kappa^1,\bfr_\kappa^2\rightarrow0$ in $L^1(I\times\mt)$.
For a function $\theta\in C^1([0,\infty))$ with $\theta'(z)=0$ for all $z\geq M_\theta$ we obtain using $\Div(\mathbb Q)_\kappa=(\Div\mathbb Q)_\kappa=0$
\begin{align*}
\partial_t\theta((\varrho)_\kappa)+\Div(\theta((\varrho)_\kappa)\bfu)&=\Div(\theta((\varrho)_\kappa)\mathbb Q)\partial_t\bfW+ (\bfr_\kappa^1+\bfr_\kappa^2)\theta'((\vr)_\kappa).
\end{align*}
Multiplying by $\psi\in C^\infty(\overline I \times \mt)$ with $\psi(T)=0$ integrating in space-time and passing to the limit yields
\begin{align}\label{eq:ren:delta}
\begin{aligned}
-\int_I\int_{\mt}\Big(\theta(\varrho)\partial_t\psi
+\theta(\varrho) \bfu\cdot\nabla\psi\Big)\dxt&=\int_{\mt}\theta(\varrho_0) \psi(0)\dxt\\
&-\int_I\int_{\mt}\big(\theta(\varrho)-\theta'(\varrho)\varrho\big)\,\Div\bfu\dxt
\\&+\int_I\int_{\mt}\theta(\varrho) \mathbb Q\cdot\nabla_x \psi\,\partial_t \bfW\dxt.
\end{aligned}
\end{align} 
By the monotonicity of the mapping $\varrho\mapsto p(\varrho)$, we find that
\begin{align*}
(\lambda+2\mu)\liminf_{\varepsilon\rightarrow0}&\int_{I\times\mt} \big(\Div \bfu_\varepsilon\,\varrho_\varepsilon -\Div \bfu\,\varrho\big)\dxt\\
=&\liminf_{\varepsilon\rightarrow0}\int_{I\times\mt} \big(p(\varrho_\varepsilon)- \overline{p}\big)\big(\varrho_\varepsilon-\varrho\big)\dxt\geq 0
\end{align*}
using \eqref{eq:fluxpsi} (together with the convergences \eqref{eq:convu1} and \eqref{eq:convrho1}).
We conclude
\begin{align}\label{8.12}
\overline{\Div \bfu\,\varrho}\geq \Div \bfu\,\varrho \quad\text{a.e. in }\quad I\times\mt,
\end{align}
where 
\begin{align*}
\Div \bfu_\varepsilon\,\varrho_\varepsilon\rightharpoonup \overline{\Div \bfu\,\varrho}\quad\text{in}\quad L^1(I\times\mt),
\end{align*}
recall \eqref{eq:convu1} and \eqref{eq:convrho1}. Now, we compute both sides of
\eqref{8.12} by means of the corresponding continuity equations. Since $(\bfu_\varepsilon,\vr_\varepsilon)$ is a strong solution to
\eqref{eq:con:eps} (which can be shown by parabolic maximum regularity theory) it is also a renormalized solution and we have
\begin{align*}
\partial_t\theta(\varrho_\varepsilon)+\Div(\theta(\varrho_\varepsilon)\bfu)&=(\theta(\varrho_\varepsilon)-\theta'(\varrho_\varepsilon)\varrho_\varepsilon)+\Div(\theta(\varrho_\varepsilon)\mathbb Q)\partial_t\bfW\\
&+\Delta_x\theta(\vr_\varepsilon)-\theta''(\vr_\varepsilon)|\nabla\vr_\varepsilon|^2
\end{align*} 
for any $\theta\in C^2([0,\infty))$.
Choosing $\theta(z)=z\ln z$ and integrating in space-time we gain
\begin{align}\label{8.15}
\int_0^{t}\int_{\mt}\Div \bfu_{\varepsilon}\,\varrho_{\varepsilon}\dxs \leq\int_{\mt}\varrho_0\ln(\varrho_0)\dx
-\int_{\mt}\varrho_\varepsilon(t)\ln(\varrho_\varepsilon(t)\dx
\end{align}
for almost all $0 \leq t < T$.
Similarly, equation \eqref{eq:ren:delta} yields with the choice $\psi=\mathbb I_{[0,t]}$
\begin{align}\label{8.14}
\int_0^{t}\int_{\mt}\Div \bfu\,\varrho\dxs=\int_{\Omega_0}\varrho_0\ln(\varrho_0)\dx
-\int_{\mt}\varrho(t)\ln(\varrho(t))\dx.
\end{align}
Combining \eqref{8.12}--\eqref{8.14} shows
\begin{align*}
\limsup_{\varepsilon\rightarrow0}\int_{\mt}\varrho_\varepsilon(t)\ln(\varrho_\varepsilon(t))\dx\leq \int_{\mt}\varrho(t)\ln(\varrho(t))\dx
\end{align*}
for any $t\in I$.
This gives the claimed convergence $\varrho_{\varepsilon}\rightarrow\varrho$ in $L^1(I\times\mt)$ by convexity of $z\mapsto z\ln z$. Consequently, we have $\overline p=p(\varrho)$ and the proof of Theorem \ref{thm:delta} is complete.

\subsection{Proof of Theorem~\ref{thm:main}.}
\label{sec:6}

In this subsection we are ready to prove the main result of this section by passing to the limit $\delta\rightarrow0$ in the system \eqref{eq:mom:delta}--\eqref{eq:ene:delta} from Section \ref{subsec:del}.
Given initial data $(\bfq_0,\varrho_0)$ belonging to the function spaces stated in Theorem \ref{thm:main}
it is standard to find regularized versions $\bfq_0^{\delta}$ and $\varrho_0^{\delta}$ such that for all $\delta>0$
\begin{align*}
 \varrho_0^{\delta}\in C^{2,\alpha}(\mt),\ \varrho_0^{\delta}\ \text{strictly positive}, \int_{\mt} \varrho_0^{\delta} \dx = \int_{\mt} \varrho_0 \dx
 \end{align*}
 as well as $\bfq_0^{\delta} \to \bfq_0$ in $L^{\frac{2\gamma}{\gamma+1}}(\mt)$, $\varrho_0^{\delta} \to \varrho_0$ in $L^\gamma(\mt)$ and
 \begin{align*}
 \int_{\mt}\Big(\frac{1}{2} \frac{| {\bfq}_0^{\delta} |^2}{\varrho_0^{\delta}} &+ P_{\delta} (\varrho_0^{\delta})\Big)\dx\rightarrow  \int_{\mt}\Big(\frac{1}{2} \frac{| {\bfq}_{0} |^2}{\varrho_0} + P(\varrho_{0})\Big)\dx,
 \end{align*}
 as $\delta\rightarrow0$.
For a given $\delta$ we gain a weak solution $(\bfu_\delta,\varrho_\delta)$ to \eqref{eq:mom:delta}--\eqref{eq:ene:delta} with this data by Proposition~\ref{thm:delta}. Exactly as in Section \ref{subsec:del} we  deduce the following uniform bounds from the energy inequality:
\begin{equation} \label{wWS47}
 \sup_{t \in I} \| \varrho_\delta \|^{\gamma}_{L^{\gamma}(\mt)}  +
  \sup_{t \in I} \delta \|  \varrho_\delta \|^\beta_{L^\beta(\mt)}  
\leq c,
\end{equation}
\begin{equation} \label{wWS48}
\begin{split}
   \sup_{t \in I} \big\| \varrho_\delta |\bfu_\delta|^2 \big\|_{L^1(\mt)} +
 \sup_{t \in I} \big\| \varrho_\delta \bfu_{\delta} \big\|^\frac{2\gamma}{\gamma+1}_{L^{\frac{2\gamma}{\gamma+1}}(\mt)}   \leq c,
\end{split}
\end{equation}
\begin{equation} \label{wWS49}
 \big\| \bfu_\delta \big\|^{2}_{L^2(I;W^{1,2}(\mt))}  \leq c.
\end{equation}
Finally, we have the conservation of mass principle resulting from the continuity equation, i.e.,
\begin{equation} \label{wWS411}
\| \varrho_{\delta}(\tau, \cdot) \|_{L^1(\mt)} = \int_{\mt} \varrho(\tau, \cdot) \dx = \int_{\mt} \varrho_0 \dx  \quad \mbox{for all}\ \tau\in[0,T].
\end{equation}
Hence we may take a subsequence, such that
\begin{align}
\bfu_{\delta}&\rightharpoonup \bfu\quad\text{in}\quad L^2(I;W^{1,2}(\mt)),\label{eq:convu}\\
\varrho_{\delta}&\rightharpoonup^{\ast}\varrho\quad\text{in}\quad L^\infty(I;L^\gamma(\mt)).\label{eq:convrho}
\end{align}
Arguing as in Section \ref{subsec:del}, we find for all $q\in (1,\frac{6\gamma}{\gamma+6})$ that
\begin{align}
\varrho_{\delta}\bfu_{\delta}&\rightharpoonup {\varrho}  {\bfu}\quad\text{in}\quad L^2(I, L^q(\mt))\label{conv:rhov2delta}\\
{\varrho}_{\delta}  {\bfu}_{\delta}\otimes  {\bfu}_{\delta}&\rightarrow  {\varrho}  {\bfu}\otimes  {\bfu}\quad\text{in}\quad L^1(I;L^1(\mt)).\label{conv:rhovv2delta}
\end{align}
As before in \eqref{eq:gamma+1} we have higher integrability of the density in the sense that for $0<\Theta\leq  \frac{2}{N}\gamma-1$
\begin{equation}\label{eq:gamma+1'}
\int_{I\times \mt}p_\delta(\varrho_{\delta})\varrho_{\delta}^{\Theta}\,\dd x\,\dd t\leq c
\end{equation}
with constant independent of $\delta$. In order to prove
\eqref{eq:gamma+1'} we test the momentum equation
\eqref{eq:mom:delta} by $\Delta_x^{-1}\nabla_x\vr^\Theta$.
Noticing that
$$\partial_t\Delta^{-1}_x\nabla_x\varrho_\delta^\Theta=\nabla\Delta^{-1}_x\Div(-\varrho_\delta^\Theta \bfu_\varepsilon)+(1-\Theta)\varrho_\delta^\Theta\Div\bfu_\delta+\nabla_x\Delta^{-1}_x\Div(\varrho_\delta^\Theta\mathbb Q\partial_t\bfW)$$ 
as a consequence of the renormalized equation of continuity \eqref{eq:ren:delta}
the terms arising from the noise are (these must be controlled in addition to the known estimates)
\begin{align}
\begin{aligned}
&\int_I\int_ {\mt}\Big[  \Del^{-1} \Grad [\vr^\Theta] \cdot \Div (\vr_\delta \vu_\delta \otimes \mathbb{Q} ) + 	\vr_\delta \vu_\delta \cdot \Del^{-1} \Grad  \Div (\vr^\Theta_\delta \mathbb{Q} ) \Big] \partial_t \bfW\dxs\\
	&= \int_I\int_ {\mt}\partial_{x_j} \Big[ \Del^{-1} \partial_{x_i} [\vr_\delta^\Theta] \vr_\delta u^i_\delta Q_{j,k} \Big] \partial_t W_k\dxs \\
	&+ \int_I\int_ {\mt}\left[ u^i_\delta \Big( \vr_\delta \partial_{x_j} \Del^{-1} \partial_{x_i} [ \vr^\Theta_\delta Q_{j,k}] - \vr_\delta Q_{j,k} \partial_{x_j} \Del^{-1} \partial_{x_i} [\vr_\delta^\Theta] \Big) \right] \partial_t W_k\dxs\\
	&= \int_I\int_ {\mt}\left[ u^i_\delta \Big( \vr_\delta \partial_{x_j} \Del^{-1} \partial_{x_i} [ \vr_\delta^\Theta Q_{j,k}] - \vr_\delta Q_{j,k} \partial_{x_j} \Del^{-1} \partial_{x_i} [\vr_\delta^\Theta] \Big) \right] \partial_t W_k\dxs\\
	&\leq\|\mathbb Q\partial_t\bfW\|_{L^\infty(I\times \mt)}\|\bfu_\delta\|_{L^1(I;L^6(\mt))}\|\vr_\delta\|_{L^\infty(I;L^\gamma(\mt))}\|\vr_\delta^\Theta\|_{L^\infty(I;L^q(\mt))}
	\end{aligned}
	\label{P2delta}
\end{align}
with $q=\frac{6\gamma}{6\gamma-\gamma-6}$. Note that the integral above cancels if $\mathbb Q$ is independent of $x$ such that \eqref{eq:gamma+1'} is independent of $\mathbb Q$ and $\bfW$ in this case. In general,
it is bounded by \eqref{wWS47}--\eqref{wWS49} as a consequence of our choice of $\Theta$ and the assumptions on $\mathbb Q$ and $\bfW$. This proves \eqref{eq:gamma+1'}
 which yields the existence of a function $\overline p$ such that (for a subsequence)
\begin{align}\label{eq:limp'}
p_\delta(\varrho^{(\delta)})\rightharpoonup\overline p\quad\text{in}\quad L^{1}(I\times\mt),\\
\label{1301}
\delta(\varrho^{(\delta)})^{\beta}\rightarrow0\quad\text{in}\quad L^1(I\times\mt).
\end{align}

Using \eqref{eq:limp'} and the convergences \eqref{eq:convu}--\eqref{conv:rhovv2delta} we can pass to the limit in \eqref{eq:mom:delta} and \eqref{eq:con:delta} and obtain
\begin{align}\label{eq:apulim'}
\begin{aligned}
&-\int_I\int_{\mt} \Big(\varrho \bfu\cdot \partial_t \bfphi +\varrho \bfu\otimes \bfu:\nabla_x \bfphi\Big)\dxt
\\
&+\int_I\int_{\mt}\mathbb S(\nabla_x \bfu):\nabla_x \bfphi \dxt-\int_I\int_{\mt}
\overline p\,\Div \bfphi \dxt
\\&=\int_{\mt}\bfq_0 \cdot \bfphi(0)\dx-\int_I\int_{\mt}\varrho \bfu\otimes\mathbb Q:\nabla_x \bfphi\,\partial_t \bfW\dxt
\end{aligned}
\end{align} 
for all test-functions $\bfphi\in C^\infty(\overline I\times\mt)$ with $\bfphi(T)=0$. Moreover, the equation of continuity
\begin{align}\label{eq:apvarrholim}
-\int_I\int_{\mt}\Big(\varrho\partial_t\psi
+\varrho \bfu\cdot\nabla\psi\Big)\dxt&=\int_{\mt}\varrho_0 \psi(0)\dxt\\&+\int_I\int_{\mt}\varrho \mathbb Q\cdot\nabla_x \psi\,\partial_t \bfW\dxt
\end{align}
hold for all $\psi\in C^\infty(\overline{I}\times \mt)$ with $\psi(T)=0$.

It remains to show strong convergence of $\varrho^{(\delta)}$. 
We define the $L^\infty$-truncation
\begin{align}\label{eq:Tk'}
T_k(z):=k\,T\Big(\frac{z}{k}\Big)\quad z\in\R,\,\, k\in\mathbb N.
\end{align}
Here $T$ is a smooth concave function on $\R$ such that $T(z)=z$ for $z\leq 1$ and $T(z)=2$ for $z\geq3$. we clearly have
\begin{align}
 T_k(\varrho_{\delta})&\rightharpoonup {T}^{1,k}\quad\text{in}\quad C_w(\overline I;L^p( \mt))\quad\forall p\in[1,\infty),\label{eq:Tk1'}\\
\big(T_k'(\varrho_{\delta})\varrho_{\delta}-T_k(\varrho_{\delta})\big)\Div \bfu_{\delta}&\rightharpoonup{T}^{2,k}
\quad\text{in}\quad L^2(I\times\mt),\label{eq:Tk2'}
\end{align}
for some limit functions ${T}^{1,k}$ and ${T}^{2,k}$.
Now we have to show that 
\begin{align}\label{eq:flux'}
\begin{aligned}
\int_{I\times\mt}&\big( p_\delta(\varrho_{\delta})-(\lambda+2\mu)\Div \bfu_{\delta}\big)\,T_k(\varrho_{\delta})\dxt\\&\longrightarrow\int_{I\times\mt} \big( \overline{p}-(\lambda+2\mu)\Div \bfu\big)\,T^{1,k}\dxt.
\end{aligned}
\end{align}
In order to prove \eqref{eq:flux'} we test \eqref{eq:mom:delta} by $\Delta_x^{-1}\nabla_xT_k(\vr_\delta)$, while \eqref{eq:apulim'} is tested by
$\Delta_x^{-1}\nabla_xT^{1,k}$. The crucial point here, which makes the difference to known theory, is the prove the convergence of the terms relating to the noise: we have to show justify that
\begin{align}\label{lim:noiseflux}
\begin{aligned}
&\int_I\int_ {\mt}\Big[  \Del^{-1} \Grad [T_k(\vr_\delta)] \cdot \Div (\vr_\delta \vu_\delta \otimes \mathbb{Q} ) + 	\vr_\delta \vu_\delta \cdot \Del^{-1} \Grad  \Div (T_k(\vr_\delta) \mathbb{Q} ) \Big] \partial_t \bfW\dxs\\
&\rightarrow \int_I\int_ {\mt}\Big[  \Del^{-1} \Grad [T^{1,k}] \cdot \Div (\vr \vu \otimes \mathbb{Q} ) + 	\vr \vu \cdot \Del^{-1} \Grad  \Div (T^{1,k} \mathbb{Q} ) \Big] \partial_t \bfW\dxs
\end{aligned}
\end{align}
as $\delta\rightarrow0$. As in \eqref{P2delta} we can rewrite the term in question as
\begin{align*}
&= \int_I\int_ {\mt}\left[ u^i_\delta \Big( \vr_\delta \partial_{x_j} \Del^{-1} \partial_{x_i} [ T_k(\vr_\delta) Q_{j,k}] - \vr_\delta Q_{j,k} \partial_{x_j} \Del^{-1} \partial_{x_i} [T_k(\vr_\delta)] \Big) \right] \partial_t W_k\dxs
\end{align*}
such that the convergence in \eqref{lim:noiseflux} follows from
from \eqref{eq:convu}, \eqref{eq:convrho} and \eqref{eq:Tk1'} using the div-curl lemma (in the version of \cite[Lemma 3.4]{feireisl1}).
The next aim is to prove that $\varrho$ is a renormalized solution. Using \eqref{eq:ren:delta} with $\theta=T_k$ and passing to the limit $\delta\rightarrow0$ we arrive at
\begin{align}\label{eq:Tk''}
\partial_t T^{1,k}+\Div\big( T^{1,k}\bfu\big)+T^{2,k}= \Div\big(T^{1,k} \mathbb Q\big)\,\partial_t \bfW
\end{align}
in the sense of distributions in $I\times\mt$. Note that we extended
$\varrho$ by zero to $\R^n$. 
The next step is to show for some $q>0$
\begin{align}\label{eq.amplosc''}
\limsup_{\delta\rightarrow0}\int_{I\times\mt}|T_k(\varrho_{\delta})-T_k(\varrho)|^{q}\dxt\leq C,
\end{align}
where $C$ does not depend on $k$. The proof of \eqref{eq.amplosc''} follows exactly the arguments from the classical setting
 (see \cite{feireisl1}) using \eqref{eq:flux'} and the uniform bounds on $\bfu_\delta$. Applying a smoothing procedure (as outlined in the proof of \eqref{eq:ren:delta}) to \eqref{eq:Tk''}
 we have
 \begin{align}\label{eq:Tk''ren}
\partial_t \theta(T^{1,k})+\Div\big( \theta(T^{1,k})\bfu\big)+\theta'(T^{1,k})T^{2,k}= \Div\big(\theta(T^{1,k}) \mathbb Q\big)\,\partial_t \bfW
\end{align}
in the sense of distributions
for all $\theta\in C^1([0,\infty))$ with $\theta'(z)=0$ for all $z\geq M_\theta$. Based on \eqref{eq.amplosc''}
one can now prove that $T^{1,k}\rightarrow \varrho$
and $\theta'(T^{1,k})T^{2,k}\rightarrow0$ as $k\rightarrow\infty$.
One can now use the argument from \cite[Section 7.3]{feireisl1} to conclude the proof of strong convergence of the density.
This is not affected by the noise as it disappears after integrating \eqref{eq:Tk''ren} in space.

\section{Rough noise}
\label{s:r}

The purpose of this section is to prove an existence result for the compressible Navier--Stokes system \eqref{p1}--\eqref{p1}
driven by a rough transport noise. Here, we assume that the vector fields $\mathbb{Q}$ are independent of the spatial variable, i.e. $\mathbb{Q}=(\mathbf{Q}_{k})_{k=1}^{K}\subset R^{N}$. Before we give a rigorous definition of a solution in Section~\ref {sec:roughNS}
we introduce the set-up concerning rough paths.

\subsection{Rough paths} \label{ss:rp}

In this section 
we introduce the notion of a rough path. For an  introduction to the theory of rough paths, we refer the reader to the monographs \cite{MR2314753, FrVi10, FrHa14}. 

For a given interval $I,$ we define $\Delta_I := \{(s,t)\in I^2: s\le t\}$ and $\Delta^{(2)}_I := \{(s,\theta,t)\in I^3: s\le\theta\le t\}$.  For a given $T>0$, we let $\Delta_T := \Delta_{[0,T]}$ and $\Delta^{(2)}_T=\Delta^{(2)}_{[0,T]}$. Let $\mathcal{P}(I)$ denote the set of all partitions of an interval $I$ and let $E$ be a Banach space with norm $\| \cdot\|_E$. 
A two-index map $g: \Delta_I \rightarrow E$ is said to have finite $p$-variation for some $p>0$ on $I$ if 
$$
\|g\|_{p-\textnormal{var};I;E}:=\sup_{(t_i)\in \clP(I)}\left(\sum_{i}\|g _{t_i t_{i+1}}\|^p_E\right)^{\frac{1}{p}}<\infty.
$$
We denote by $C_2^{p-\textnormal{var}}(I; E)$ the set of all two-index maps with finite $p$-variation on $I$ equipped with the seminorm $\|\cdot\|_{p-\textnormal{var}; I;E}$. In this section,  we drop the dependence of norms  on the space $E$ when convenient. We denote by $C^{p-\textnormal{var}}(I; E)$ the set of all paths $z : I \rightarrow E$ such that $\delta z \in C_2^{p-\textnormal{var}}(I; E)$, where  $\delta z_{st} := z_t - z_s$.

For a given interval $I$, a two-index map $\omega: \Delta_I \rightarrow [0,\infty)$ is called superadditive if for all $(s,\theta,t)\in \Delta^{(2)}_I$,
$$
\omega(s,\theta)+\omega(\theta ,t)\le \omega(s,t).
$$
A two-index map $\omega: \Delta_I \rightarrow [0,\infty)$  is called a control if it is superadditive, continuous on $\Delta_I$ and for all $s\in I$, $\omega(s,s)=0$. 

If for a given $p > 0$, $g \in C^{p-\textnormal{var}}_2(I; E)$, then  it can be shown that the two-index map $\omega_g: \Delta_I \rightarrow [0,\infty)$ defined by
$$
\omega_g(s,t)= \|g\|_{p-\textnormal{var};[s,t]}^p 
$$
is a control  (see, e.g.,  Proposition 5.8 in \cite{FrVi10}). 

We shall need a local version of the $p$-variation spaces, for which we restrict the mesh size of the partition  by a control.

\begin{Definition}\label{def:variationSpace}
Given an interval $I=[a,b]$, a control $\varpi$ and real number $L> 0$, we denote by $C^{p-\textnormal{var}}_{2, \varpi, L}(I; E)$  the space of continuous two-index maps $g : \Delta_I \rightarrow E$ for which  there exists at least one control $\omega$ such that for every $(s,t)\in \Delta_I$ with  $\varpi(s,t) \leq L$, it holds that
$
|g_{st}|_E \leq \omega(s,t)^{\frac{1}{p}}.
$ We define a semi-norm on this space by
$$
|g |_{p-\textnormal{var}, \varpi,L; I} =\inf \left\{\omega(a,b)^{\frac{1}{p}} :  \omega  \textnormal{ is a control s.t. } |g_{st}| \leq \omega(s,t)^{\frac{1}{p}}, \;\forall (s, t)  \in \Delta_{I} \textnormal{  with } \varpi(s,t) \leq L \right\} . 
$$ 
\end{Definition}
For a two-index map $g: \Delta_{I}\rightarrow R$, we define the second order increment operator $$\delta g_{s \theta t} = g_{st} - g_{\theta t} - g_{s \theta}, \quad \forall (s,\theta,t)\in\Delta^{(2)}_I.$$

\begin{Definition}\label{defi-rough-path}
Let $K$ be natural and $p\in[2,3)$. A continuous $p$-rough path is a  pair 
\begin{equation}\label{p-var-rp}
\bZ=(Z, \mathbb{Z}) \in C^{p-\textnormal{var}}_2 ([0,T];R^{K}) \times C^{\frac{p}{2}-\textnormal{var}}_2 ([0,T]; R^{K\times K}) 
\end{equation}
that satisfies the Chen's relation 
\begin{equation*}\label{chen-rela}
\delta \mathbb{Z}_{s\theta t}=Z_{s\theta} \otimes Z_{\theta t},  \quad \forall(s, \theta, t) \in \Delta^{(2)}_{[0,T]}.
\end{equation*}
A rough path $\mathbf{Z}=(Z, \mathbb{Z})$ is said to be geometric if it can be obtained as the limit in the   product topology $C^{p-\textnormal{var}}_2 ([0,T];R^{K}) \times C^{\frac{p}{2}-\textnormal{var}}_2 ([0,T]; R^{K\times K})$  of a sequence of rough paths  $\{(Z^{n},\mathbb{Z}^{n})\}_{n=1}^{\infty}$ such that for each $n=1,2,\dots$, 
$$Z^{n}_{st}:= \delta z^{n}_{st} \quad \textnormal{ and } \quad \mathbb{Z}^{n}_{st}:=\int_s^t \delta z^{n}_{s\theta} \otimes \mathrm{d} z^{n}_\theta ,$$
for some smooth paths $z^n:[0,T] \to R^K$, where the  iterated integral is a Riemann integral. 
We denote by $\mathcal{C}^{p-\textnormal{var}}_g([0,T];R^K)$ the set of geometric $p$-rough paths and endow it with the product topology.
\end{Definition}
We will only consider geometric rough paths. Thus, in case of
a Brownian motion, a Stratonovich integral should be used for the construction of the iterated integral if one wishes to lift it to a geometric rough path.
Now we define unbounded rough drivers, which can be regarded as operator valued rough paths with values in a suitable space of unbounded operators.
In what follows, we call a scale any family $(E^{n}, \| \cdot \|_{n})_{ 0\leq n\leq 3}$ of Banach spaces such that $E^{n}$ is continuously embedded into $E^{m}$ for $n\geq m$. For $n\in\{0,1,2,3\}$ we denote by $E^{-n}$ the topological dual of $E^{n}$, and note that, in general, $E^{-0}\neq E^0.$ 
On the scale $(E_n)_{0\leq n\leq 3}$ we require the existence of a family of smoothing operators $(J^\eta)_{\eta\in (0,1)}$ acting on $E_n$ (for $n=1,2$) in such a way that the two following conditions are satisfied:
\begin{equation}\label{eq:reg-J-eta-1}
\lVert  J^\eta-\mathrm{id}\rVert_{\mathcal L(E_m,E_n)} 
\lesssim 
\eta^{m-n} \quad \text{for} \ \ (n,m)\in \{(0,1),(0,2),(1,2)\} \  , 
\end{equation}
\begin{equation}\label{eq:reg-J-eta-2}
\lVert J^\eta\rVert_{\mathcal L(E_n,E_{m})} \lesssim \eta^{-(m-n)} \quad \text{for} \ \ (n,m)\in \{(1,1),(1,2),(2,2),(1,3),(2,3)\} \ .
\end{equation}

\begin{Definition}
\label{def:urd}
Let $p\in [2,3)$ and $T>0$ be given. A continuous unbounded $p$-rough driver with respect to the scale $(E^{n}, \|\cdot \|_{n})_{0\leq n\leq 3}$, is a pair $\mathbf{A} = (A^1,A^2)$ of $2$-index maps such that
 there exists a  control $\omega_A$ on $[0,T]$ such that for every $(s,t)\in \Delta_T$,
\begin{equation}\label{ineq:UBRcontrolestimates}
\| A^1_{st}\|_{\mathcal{L}(E^{-n},E^{-(n+1)})}^p \leq\omega_{A}(s,t) \ \  \text{for}\ \ n\in\{0,1,2\}, \quad
\|A^2_{st}\|_{\mathcal{L}(E^{-n},E^{-(n+2)})}^{\frac{p}{2}} \leq\omega_{A}(s,t) \ \ \text{for}\ \ n\in\{0,1\},
\end{equation}
and   Chen's relation holds true,
\begin{equation}\label{eq:chen-relation}
\delta A^1_{s\theta t}=0,\qquad\delta A^2_{s\theta t}= A^1_{\theta t}A^1_{s\theta},\;\;\forall (s,\theta,t)\in\Delta^{(2)}_T.
\end{equation}
\end{Definition}

Now we consider the rough PDE
\begin{equation}\label{eq:gen}
\dd g_{t} = \mu(\dd t)+ {\bfA}(\dd t) g_{t}   ,
\end{equation}
where $\bfA=(A^1,A^2)$ is an unbounded $p$-rough driver on a scale $(E_n)_{0\leq n\leq 3}$ and the drift $\mu\in C^{1-\rm{var}}(I;E_{-3})$, which possibly also depends on the solution, is continuous and of finite variation. 
A path $g:I \to E_{-0}$ is called a solution (on $I$) of the equation \eqref{eq:gen} provided there exists $q<3$ and  $g^\natural \in C^{\frac{q}{3}-\rm{var}}_{2}(I,E_{-3})$ such that for every $s,t\in I$, $s<t$, and $\varphi\in E_3$,
\begin{equation}\label{eq:gen2}
(\delta g)_{st}(\varphi)=(\delta \mu)_{st}(\varphi)+g_s(\{A^{1,*}_{st}+A^{2,*}_{st}\}\varphi)+g^\natural_{st}(\varphi).
\end{equation}

The following a priori estimate is given in \cite[Cor. 2.11]{DGHT}.

\begin{Proposition}\label{cor:apriori}
Let $p\in[2,3)$ and fix an interval $I\subseteq [0,T]$. Let ${\bfA}=(A^1,A^2)$ be a continuous unbounded $p$-rough driver with respect to a scale $(E_n)_{0\leq n\leq 3}$, endowed with a family of smoothing operators $(J^\eta)_{\eta\in (0,1)}$ satisfying \eqref{eq:reg-J-eta-1} and \eqref{eq:reg-J-eta-2}, and let $\omega_A$ be a  control satisfying \eqref{ineq:UBRcontrolestimates}.
Consider a path $\mu \in C^{1-\textnormal{var}}(I; E_{-3})$ for which there exists a  control $\omega_\mu$ such that for all $s<t\in I$ and $\varphi \in E_3$, 
\begin{equation}\label{cond-mu-cor}
|(\delta \mu)_{st}(\varphi)|\leq \omega_\mu(s,t)\, \|\varphi\|_{E_2} .
\end{equation}
Besides, let $g$ be a solution on $I$  of the equation \eqref{eq:gen} such that $g^\natural \in C^{\frac{p}{3}-\textnormal{var}}_2(I;E_{-3})$.
Then there exists a constant $L=L(p)>0$ such that if $\omega_A(I)\leq L$, one has, for all $s,t\in I$, $s<t$,
\begin{equation}\label{apriori-bound-cor} 
\begin{split}
 \|g^{\natural}_{st}\|_{E_{-3}}
 \lesssim_{q}\|g\|_{L^\infty(I;E_{-0})}\,\omega_A(s,t)^\frac{3}{p}+\omega_\mu(s,t)\omega_A(s,t)^{\frac{3-p}{p}}  .
\end{split}
\end{equation}
\end{Proposition}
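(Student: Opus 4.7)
The plan is to apply the sewing-lemma strategy for unbounded rough drivers. Since $p<3$, the exponent $3/p$ exceeds $1$, so if one can show that the second-order increment $\delta g^\natural$ behaves like a control to the power $3/p$, the remainder $g^\natural$ itself will inherit the same regularity by reconstruction. I would proceed in three steps.

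First, I would compute $\delta g^\natural_{s\theta t}$ directly from \eqref{eq:gen2}. Since $\delta\circ\delta=0$, the contributions of $\delta g$ and $\delta \mu$ both vanish, and Chen's relations \eqref{eq:chen-relation}, applied to $g_s(A^{1,*}_{st}+A^{2,*}_{st})\varphi - g_s(A^{1,*}_{s\theta}+A^{2,*}_{s\theta})\varphi - g_\theta(A^{1,*}_{\theta t}+A^{2,*}_{\theta t})\varphi$, together with re-inserting \eqref{eq:gen2} on the sub-increment $\delta g_{s\theta}$, yield
\begin{equation*}
\delta g^\natural_{s\theta t}(\varphi) = \delta\mu_{s\theta}\bigl((A^{1,*}_{\theta t}+A^{2,*}_{\theta t})\varphi\bigr) + g_s\bigl((A^{1,*}_{s\theta}A^{2,*}_{\theta t}+A^{2,*}_{s\theta}A^{1,*}_{\theta t}+A^{2,*}_{s\theta}A^{2,*}_{\theta t})\varphi\bigr) + g^\natural_{s\theta}\bigl((A^{1,*}_{\theta t}+A^{2,*}_{\theta t})\varphi\bigr).
\end{equation*}

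Second, I would estimate this identity in $\|\cdot\|_{E_{-3}}$. The dual forms of \eqref{ineq:UBRcontrolestimates} give $\|A^{1,*}_{st}\|_{\mathcal{L}(E_{n+1},E_n)}\lesssim\omega_A(s,t)^{1/p}$ and $\|A^{2,*}_{st}\|_{\mathcal{L}(E_{n+2},E_n)}\lesssim\omega_A(s,t)^{2/p}$, so that each composition $A^{i,*}A^{j,*}$ maps $E_3$ into $E_{3-(i+j)}$ with weight $\omega_A(s,t)^{(i+j)/p}$. The three quadratic $g_s$-terms are thus bounded by $\|g\|_{L^\infty(I;E_{-0})}\,\omega_A(s,t)^{3/p}$, and the $\mu$-term by $\omega_\mu(s,t)\,\omega_A(s,t)^{(3-p)/p}$. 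The delicate contribution is $g^\natural_{s\theta}((A^{1,*}_{\theta t}+A^{2,*}_{\theta t})\varphi)$, where the argument only lies in $E_2$ rather than $E_3$; I would split $\varphi = J^\eta\varphi + (\mathrm{id}-J^\eta)\varphi$, balance the two contributions using \eqref{eq:reg-J-eta-1}--\eqref{eq:reg-J-eta-2}, and optimise $\eta$ as a power of $\omega_A(s,t)$. One arrives schematically at
\begin{equation*}
\|\delta g^\natural_{s\theta t}\|_{E_{-3}} \lesssim \bigl(\|g\|_{L^\infty(I;E_{-0})} + N(g^\natural)\bigr)\,\omega_A(s,t)^{3/p} + \omega_\mu(s,t)\,\omega_A(s,t)^{(3-p)/p},
\end{equation*}
where $N(g^\natural)$ denotes the local $p/3$-variation seminorm of $g^\natural$ controlled by $\omega_A$ on $[s,t]$.

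Third, since $3/p>1$, I would invoke the sewing lemma applied to the $E_{-3}$-valued two-index map $g^\natural$ in order to reconstruct $g^\natural_{st}$ from the bound above. This produces exactly the estimate \eqref{apriori-bound-cor}, augmented by a self-referential term of the form $N(g^\natural)\,\omega_A(s,t)^{3/p}$. Imposing $\omega_A(I)\leq L$ with $L=L(p)$ small enough makes the multiplicative constant in front of this term strictly smaller than $1$, so it can be absorbed into the left-hand side, closing the estimate on $I$ and giving the desired bound.

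The hard part will be precisely this absorption/bootstrap step. The estimate for $g^\natural$ depends on its own local $p/3$-variation norm through the last term of $\delta g^\natural$, and one has to choose the smoothing parameter $\eta$ in the split $\varphi=J^\eta\varphi+(\mathrm{id}-J^\eta)\varphi$ very carefully, and track constants throughout \eqref{eq:reg-J-eta-1}--\eqref{eq:reg-J-eta-2}, so that the coefficient of $N(g^\natural)$ on the right-hand side is genuinely smaller than $1$ whenever $\omega_A(I)\leq L$. The remaining book-keeping with superadditive controls and the final sewing invocation are standard within the unbounded-rough-driver framework of \cite{DGHT}.
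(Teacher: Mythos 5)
The paper does not prove this proposition: it is quoted verbatim from \cite[Cor.~2.11]{DGHT}, as the sentence preceding the statement indicates, so there is no in-paper argument to compare against. Your sketch is a reasonable reconstruction of the cited DGHT argument. The identity you derive for $\delta g^\natural_{s\theta t}$ from Chen's relations and re-insertion of \eqref{eq:gen2} is correct, and the overall plan --- bound $\delta g^\natural$, regain the missing regularity via the smoothing family $J^\eta$ at the cost of controlled powers of $\omega_A$, and close a self-referential bound under the smallness hypothesis $\omega_A(I)\le L(p)$ --- is exactly how the cited result is obtained.

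Two points are a bit loose as written. First, you present the smoothing split $\varphi=J^\eta\varphi+(\mathrm{id}-J^\eta)\varphi$ as a device only for the single $g^\natural_{s\theta}\bigl((A^{1,*}_{\theta t}+A^{2,*}_{\theta t})\varphi\bigr)$ term, but it must be applied \emph{before} the estimates: already the quadratic term $g_s(A^{2,*}_{s\theta}A^{2,*}_{\theta t}\varphi)$ is not directly bounded by the duals of \eqref{ineq:UBRcontrolestimates}, since for $\varphi\in E_3$ one has $A^{2,*}_{\theta t}\varphi\in E_1$ and $A^{2,*}_{s\theta}$ is not given as an operator from $E_1$ into $E_0$, so the pairing with $g_s\in E_{-0}$ is a priori undefined, not merely of harmless higher order $\omega_A^{4/p}$. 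In the actual argument one estimates $\delta\bigl(g^\natural_{\cdot\cdot}(J^\eta\varphi)\bigr)_{s\theta t}$, with $J^\eta\varphi$ living in a high enough rung of the scale for all compositions to make sense by \eqref{eq:reg-J-eta-2}, estimates $g^\natural_{st}\bigl((\mathrm{id}-J^\eta)\varphi\bigr)$ crudely from the equation together with \eqref{eq:reg-J-eta-1}, and only then optimises $\eta\sim\omega_A(s,t)^{1/p}$. Second, the reconstruction step is not quite an application of the sewing lemma (which produces a new object); it is the elementary Davie--Gubinelli uniqueness bound: if a two-index map $h$ satisfies $\|\delta h_{s\theta t}\|\lesssim\omega(s,t)^{\alpha}$ with $\alpha>1$ and $h_{st}\to 0$ as $t\downarrow s$, then $\|h_{st}\|\lesssim\omega(s,t)^{\alpha}$. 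One applies this to $g^\natural$ itself and absorbs the self-referential $N(g^\natural)\,\omega_A(s,t)^{3/p}$ piece, as you indicate, using $\omega_A(I)\le L(p)$ small.
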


\subsection{Navier--Stokes equations driven by rough paths}
\label{sec:roughNS}

We are now aiming to formulate the system under consideration
\eqref{p1}--\eqref{p2} as a rough  equation in the spirit of \eqref{eq:gen} driven by a rough path $\mathbf{Z}$.
Indeed, the system 
can be rewritten as
\[ \partial_t \bfV + {\mu} =\mathbb{Q} \cdot \nabla_x \bfV \dot{\mathbf{Z}}, \]
where $\bfV = (\varrho, \varrho \mathbf{u})$ and $\mu$ contains all the drift part, that is
\begin{align*}
\mu=\begin{pmatrix}
\Div (\vr \vu) \\
	 \Div (\vr \vu \otimes \vu) + \Grad p(\vr) -\Div \mathbb{S}(\Ds \vu) 
\end{pmatrix}
\end{align*}
to be interpreted as an object in $W^{-1,1}(\mt;R^{N+1})$. As in Hofmanov\'a, Leahy, Nilssen \cite[Section~2.5]{HLN} we
rewrite the equation in the rough path form
\begin{equation}
  \delta \bfV_{s t} + \delta \mu_{s t} = A_{s t}^1 \bfV_s + A_{s t}^2 \bfV_s + \bfV_{s
  t}^{\natural}, \label{eq:NSrough}
\end{equation}
where
\[ A^1_{s t} \varphi =\mathbb{Q} \cdot \nabla_x \varphi {Z}_{s t},
   \qquad A^2_{s t}\varphi =\mathbb{Q} \cdot \nabla_x (\mathbb{Q} \cdot \nabla_{x}
   \varphi) \mathbb{Z}_{s t} . \]
The remainder $\bfV^{\natural}_{s t}=(v^{\natural}_{s t},V^{\natural}_{s t})$ is defined impicitly through the equation, that is
\begin{equation}
  \bfV_{s
  t}^{\natural}:=\delta \bfV_{s t} + \delta \mu_{s t} -A_{s t}^1 \bfV_s - A_{s t}^2 \bfV_s . \label{eq:1'}
\end{equation}
It is a two-index map, not an increment, i.e., it depends on two parameters
here denoted by $s, t$. It is required to be sufficiently small so that $A_{s
t}^1 \bfV_s + A_{s t}^2 \bfV_s$ in \eqref{eq:NSrough} is a local approximation of the rough
integral $\int \mathbb{Q} \cdot \nabla_x \bfV \dd \mathbf{Z}$. 
The scale of function spaces is given by the dual spaces $E_{-n}=W^{-n,1}(\mt;R^{N+1})$ and $E_{n}$ being the corresponding pre-duals. The smoothing operators $J^\eta$ as required in \eqref{eq:reg-J-eta-1} and \eqref{eq:reg-J-eta-2} are given e.g. by projections in Fourier space. We are now in the position to give a rigorous formulation of a weak solution
$(\bfu,\varrho)$: 
\begin{itemize}
\item\label{R1} The momentum equation holds in the sense
that the remainder $V_{st}^{\natural}$ given by 
\begin{align}\label{eq:momrough}
\begin{aligned}
\int_{\mt}V_{st}^{\natural}\cdot\bfphi\dx=&\int_{\mt}\big((\varrho \bfu)(t)-(\varrho \bfu)(s)\big)\cdot \bfphi\dx -\int_s^t\int_{\mt}\varrho \bfu\otimes \bfu:\nabla_x \bfphi\dxs
\\
&+\int_s^t\int_{\mt}\mathbb S(\nabla_x \bfu):\nabla_x \bfphi \dxt-\int_s^t\int_{\mt}
p(\varrho)\,\Div \bfphi \dxs
\\&-\int_{\mt}(\varrho \bfu)(s)\otimes\mathbb Q:\nabla_x \bfphi\,Z_{st}\dx\\
&+\int_{\mt}(\varrho \bfu)(s)\otimes\mathbb Q:\nabla_x(\mathbb Q\cdot\nabla_x \bfphi)\,\mathbb Z_{st}\dx
\end{aligned}
\end{align} 
for $\bfphi\in C^\infty(\mt;R^N)$ satisfies $V^\natural \in C^{\frac{q}{3}-\rm{var}}_{2}(I,W^{-3,1}(\mt;R^N))$ for some $q<3$. Moreover we have $\varrho\bfu(0)=\bfq_0$.
\item\label{R2}  The continuity equation holds in the sense that
the remainder $v_{st}^{\natural}$ given by 
\begin{align}\label{eq:conrough}
\begin{aligned}
\int_{\mt}v_{st}^{\natural}\cdot\psi\dx=&\int_{\mt}\big(\varrho(t)-\varrho (s)\big)\cdot \psi\dx -\int_s^t\int_{\mt}\varrho \bfu\cdot\nabla_x \psi\dxs
\\&-\int_{\mt}\varrho (s)\mathbb Q\cdot\nabla_x \psi\,Z_{st}\dx\\
&+\int_{\mt}\varrho (s)\mathbb Q\cdot\nabla_x(\mathbb Q\cdot\nabla_x \psi)\,\mathbb Z_{st}\dx
\end{aligned}
\end{align}
for all $\psi\in C^\infty(\mt)$ satisfies $v^\natural \in C^{\frac{q}{3}-\rm{var}}_{2}(I,W^{-3,1}(\mt))$ for some $q<3$. Moreover, we have $\varrho(0)=\varrho_0$. 
\item \label{R3} The energy inequality is satisfied in the sense that
\begin{align} \label{eq:enerough}
\begin{aligned}
- \int_I &\partial_t \psi \,
\mathscr E \dt+\int_I\psi\int_{\mt}\mathbb S(\nabla_x \bfu):\nabla_x \bfu\dxt\leq
\psi(0) \mathscr E(0)
\end{aligned}
\end{align}
holds for any $\psi \in C^\infty_c([0, T))$.
Here, we abbreviated
$$\mathscr E(t)= \int_{\mathbb{T}^{N}}\Big(\frac{1}{2} \varrho(t) | {\bfu}(t) |^2 + P(\varrho(t))\Big)\dx$$
and the pressure potential is given by $P(\varrho)=\frac{a}{\gamma-1}\varrho^\gamma$.
\end{itemize}

The main result of this section reads as follows.

\begin{Theorem}\label{thm:mainrough}
Assume that we have
\begin{align*}
\frac{|\bfq_0|^2}{\varrho_0}&\in L^1(\mt),\ \varrho_0\in L^{\gamma}(\mt).
\end{align*}
Furthermore, suppose that $\mathbb Q=(\bfQ_k )_{k=1}^K$ with $\bfQ_k\in R^{N\times N}$ and that $\bfZ$ is a geometric $p$-rough path with $K$ natural and $p\in[2,3)$.
 There is a solution $$(\bfu,\varrho)\in L^2(I;W^{1,2}(\mt))\times C_w(\overline I;L^\gamma(\mt))$$ to \eqref{eq:momrough}--\eqref{eq:enerough}. 
\end{Theorem}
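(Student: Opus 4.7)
The overall strategy is a Wong--Zakai approximation using Theorem~\ref{thm:main} as a black box. By the very definition of a geometric $p$-rough path, there exists a sequence of smooth paths $z^n:[0,T]\to R^K$ whose canonical lifts $(Z^n,\mathbb Z^n)$ converge to $\bfZ=(Z,\mathbb Z)$ in the product $p$-variation topology. Setting $\bfW:=z^n$ (of class $C^1$) and retaining the constant $\mathbb Q$, which trivially satisfies $\operatorname{div}\bfQ_k=0$ and the regularity \eqref{noise1}, Theorem~\ref{thm:main} produces weak solutions $(\bfu_n,\varrho_n)$ in the sense of \eqref{eq:mom}--\eqref{eq:ene}.

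The energy inequality \eqref{eq:ene} together with conservation of mass yields the standard bounds
\[
\sup_{t\in I}\|\varrho_n\|_{L^\gamma(\mt)}+\sup_{t\in I}\|\sqrt{\varrho_n}\,\bfu_n\|_{L^2(\mt)}+\|\bfu_n\|_{L^2(I;W^{1,2}(\mt))}\leq c,
\]
with $c$ independent of $n$, since the right-hand side of \eqref{eq:ene} is free of the noise (this is precisely the point of Section~\ref{EE}). Moreover, because $\mathbb Q$ is independent of $x$, the second part of Theorem~\ref{thm:main} ensures that the higher pressure estimate \eqref{eq:higherpressure} is uniform in $n$. Extracting subsequences gives
\[
\bfu_n\rightharpoonup \bfu\ \text{in}\ L^2(I;W^{1,2}(\mt)),\qquad \varrho_n\overset{\ast}{\rightharpoonup}\varrho\ \text{in}\ L^\infty(I;L^\gamma(\mt)),
\]
together with weak compactness of $\varrho_n\bfu_n$ and $\varrho_n\bfu_n\otimes\bfu_n$ exactly as in Section~\ref{subsec:del}.

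To pass to the limit in the equations, I recast the approximate system in the rough-path form \eqref{eq:momrough}--\eqref{eq:conrough} with drivers
\[
A^{1,n}_{st}\bfvarphi=\mathbb Q\cdot\nabla_x\bfvarphi\,Z^n_{st},\qquad A^{2,n}_{st}\bfvarphi=\mathbb Q\cdot\nabla_x\bigl(\mathbb Q\cdot\nabla_x\bfvarphi\bigr)\,\mathbb Z^n_{st},
\]
and drift $\mu_n$ collecting the convective, viscous and pressure contributions. The drift is bounded in $C^{1-\textnormal{var}}(I;W^{-1,1}(\mt;R^{N+1}))$ uniformly in $n$ by the a priori estimates, so Proposition~\ref{cor:apriori} delivers a uniform bound on the remainders $v^\natural_n,V^\natural_n$ in $C^{p/3-\textnormal{var}}_2(I;W^{-3,1}(\mt))$. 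Combined with the $p$-variation convergence $(Z^n,\mathbb Z^n)\to(Z,\mathbb Z)$ and the weak convergences above, each term in \eqref{eq:momrough}--\eqref{eq:conrough} can then be passed to the limit.

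The main obstacle, as usual, is the strong convergence of $\varrho_n$ required to identify the limit pressure, and this is exactly where the hypothesis that $\mathbb Q$ is spatially constant is decisive. Testing the approximate momentum equation by $\nabla_x\Del^{-1}T_k(\varrho_n)$ with the Feireisl truncation $T_k$, the would-be noise contribution to the effective viscous flux reduces to the commutator
\[
u^i_n\bigl(\varrho_n\,\partial_{x_j}\Del^{-1}\partial_{x_i}[T_k(\varrho_n)Q_{j,k}]-\varrho_n Q_{j,k}\,\partial_{x_j}\Del^{-1}\partial_{x_i}[T_k(\varrho_n)]\bigr),
\]
which vanishes identically whenever $Q_{j,k}$ is independent of $x$. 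Consequently the effective viscous flux identity together with the renormalization and oscillation-defect machinery of Lions--Feireisl \cite{Li2,feireisl1} carry over verbatim, delivering $\varrho_n\to\varrho$ strongly in $L^1(I\times\mt)$ and the identification $\overline p=p(\varrho)$. The energy inequality \eqref{eq:enerough} passes to the limit by weak lower semicontinuity of the dissipation and weak-$\ast$ lower semicontinuity of the total energy, completing the construction.
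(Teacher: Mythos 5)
Your proposal is correct and follows essentially the same route as the paper: Wong--Zakai approximation via Theorem~\ref{thm:main}, noise-independent energy and pressure bounds, uniform remainder control through Proposition~\ref{cor:apriori}, and the cancellation of the commutator in the effective viscous flux argument when $\mathbb Q$ is spatially constant. The only detail you gloss over (and the paper spells out) is that the renormalization of the limit equation \eqref{eq:Tk''rough} requires a mollification $J^\eta$ and the rough-path It\^o formula \cite[Proposition 7.6]{FrHa14} rather than a purely classical DiPerna--Lions argument, but the substance is the same.
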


Before proving this result, we note that the equation of continuity holds also in a renormalized sense.

\begin{Lemma}
\label{cor:renrough}
Under the assumptions of Theorem \ref{thm:mainrough} the continuity equation holds in the renormalized sense, that is, for $\theta\in C^1([0,\infty))$ with $\theta'(z) \in C_c[0, \infty)$, the remainder $v_{st}^{\theta,\natural}$ given by 
\begin{align}\label{eq:renrough}
\begin{aligned}
\int_{\mt}v_{st}^{\theta,\natural}\cdot\psi\dx=&\int_{\mt}\big(\theta(\varrho(t))-\theta(\varrho (s))\big)\cdot \psi\dx -\int_s^t\int_{\mt}\theta(\varrho) \bfu\cdot\nabla_x \psi\dxs
\\&+\int_s^t\int_{\mt}\big(\theta(\varrho)-\theta'(\varrho)\varrho\big)\,\Div\bfu\psi\dxs-\int_{\mt}\theta(\varrho (s))\mathbb Q\cdot\nabla_x \psi\,Z_{st}\dx\\
&+\int_{\mt}\theta(\varrho (s))\mathbb Q\cdot\nabla_x(\mathbb Q\cdot\nabla_x \psi)\,\mathbb Z_{st}\dx
\end{aligned}
\end{align} 
for all $\psi\in C^\infty(\mt)$ satisfies $v^{\theta,\natural} \in C^{\frac{q}{3}-\rm{var}}_{2}(I,W^{-3,1}(\mt))$ for some $q<3$.
\end{Lemma}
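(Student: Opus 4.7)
The plan is to derive \eqref{eq:renrough} by a DiPerna--Lions type spatial regularisation combined with a chain rule for geometric rough paths, paralleling the smooth-noise argument of Lemma~\ref{cor:ap1}. The key structural simplification is that the vector fields $\mathbb{Q}=(\bfQ_k)_{k=1}^K$ are constant in $x$, so the rough-driver operators $A^{1,*}_{st}$ and $A^{2,*}_{st}$, being differential operators in $x$ with constant coefficients, commute exactly with any spatial convolution $\eta_\kappa*\cdot$. Testing \eqref{eq:conrough} against $\psi*\eta_\kappa$, one obtains a rough equation of the same form for the mollified density $\vr_\kappa=\vr*\eta_\kappa$, with no commutator in the noise terms and only the classical DiPerna--Lions commutator
\[
r_\kappa=\Div\big((\vr\vu)*\eta_\kappa\big)-\Div(\vr_\kappa\vu)
\]
in the drift. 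Under the present integrability $\vr\in L^\infty(I;L^\gamma(\mt))$ and $\vu\in L^2(I;W^{1,2}(\mt))$, one obtains $r_\kappa\to 0$ in $L^1(I\times\mt)$.

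Because $\vr_\kappa$ is now smooth in $x$, we may read the mollified rough equation pointwise in $x$ and apply a second-order Taylor expansion
\[
\theta(\vr_\kappa(t))-\theta(\vr_\kappa(s))=\theta'(\vr_\kappa(s))\delta\vr_{\kappa,st}+\tfrac12\theta''(\vr_\kappa(s))(\delta\vr_{\kappa,st})^2+O\big((\delta\vr_{\kappa,st})^3\big).
\]
Substituting the leading term $\delta\vr_{\kappa,st}\sim\mathbb{Q}\cdot\nabla_x\vr_\kappa(s)Z_{st}$ into the quadratic piece and exploiting the geometric identity $Z^k_{st}Z^l_{st}=\mathbb{Z}^{kl}_{st}+\mathbb{Z}^{lk}_{st}$ together with the symmetry of $(\bfQ_k\cdot\nabla_x\vr_\kappa)(\bfQ_l\cdot\nabla_x\vr_\kappa)$ in $k,l$ turns this correction into $\theta''(\vr_\kappa)(\mathbb{Q}\cdot\nabla_x\vr_\kappa)^{\otimes 2}:\mathbb{Z}_{st}$ modulo a lower-order remainder. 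Combined with the pointwise chain-rule identity
\[
(\mathbb{Q}\cdot\nabla_x)^2\theta(\vr_\kappa)=\theta''(\vr_\kappa)(\mathbb{Q}\cdot\nabla_x\vr_\kappa)^{\otimes 2}+\theta'(\vr_\kappa)(\mathbb{Q}\cdot\nabla_x)^2\vr_\kappa,
\]
this merges with the $\mathbb{Z}_{st}$-term arising from $\theta'(\vr_\kappa)\delta\vr_{\kappa,st}$ to reconstitute exactly $(\mathbb{Q}\cdot\nabla_x)^2\theta(\vr_\kappa)\mathbb{Z}_{st}$. The conservative rearrangement $\theta'(\vr_\kappa)\Div(\vr_\kappa\vu)=\Div(\theta(\vr_\kappa)\vu)-(\theta(\vr_\kappa)-\theta'(\vr_\kappa)\vr_\kappa)\Div\vu$ produces the renormalisation drift, yielding \eqref{eq:renrough} at the level of $\vr_\kappa$ modulo the extra drift $\theta'(\vr_\kappa)r_\kappa$ and a remainder $v^{\theta,\natural,\kappa}$ of higher-order $p$-variation.

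It remains to pass to the limit $\kappa\to 0$. Boundedness of $\theta$ and $\theta'$ (compact support of $\theta'$), together with the weak $L^\gamma$-continuity of $\vr$ in time, justifies convergence of all drift and rough-path terms to the corresponding ones involving $\theta(\vr)$. The commutator residual $\int_s^t\!\int_{\mt}\theta'(\vr_\kappa)r_\kappa\psi\dxs\to 0$ by DiPerna--Lions. For the rough remainder one applies the sewing/a-priori estimate Proposition~\ref{cor:apriori} to the mollified equation on the scale introduced in Section~\ref{sec:roughNS}, obtaining a uniform bound on $v^{\theta,\natural,\kappa}$ in $C^{p/3-\mathrm{var}}_{2,\omega_A,L}(I;W^{-3,1}(\mt))$ and hence a well-defined limit $v^{\theta,\natural}\in C^{q/3-\mathrm{var}}_2(I;W^{-3,1}(\mt))$ for some $q<3$. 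The principal technical obstacle is precisely this reconciliation: the quadratic Taylor correction must combine exactly with the linear $\mathbb{Z}_{st}$-term to reproduce the correct level-two coefficient, which hinges on the geometricity of $\bfZ$; without the identity $Z\otimes Z=2\,\mathrm{sym}(\mathbb{Z})$ an Itô-type drift would persist and destroy the renormalisation. Maintaining the sewing estimate for $v^{\theta,\natural,\kappa}$ uniformly in $\kappa$, compatible with the required $p/3$-variation scale, is the most delicate step of the argument.
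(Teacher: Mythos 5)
Your plan—mollify the rough continuity equation directly, use the constant‑coefficient structure of $\mathbb{Q}$ to avoid any commutator in the noise terms, and then apply a second‑order Taylor/rough It\^o argument exploiting geometricity—captures the right rough‑path mechanism (the paper also invokes the rough It\^o formula, \cite[Prop.\ 7.6]{FrHa14}, and likewise relies on the spatial independence of $\mathbb Q$ and on Proposition \ref{cor:apriori} for the remainder bound).  However, the route is different from the paper's and it has a real gap in the drift commutator.

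The gap is the claim that, with only $\vr\in L^\infty(I;L^\gamma(\mt))$ and $\vu\in L^2(I;W^{1,2}(\mt))$, the Friedrichs commutator $r_\kappa=\Div\big((\vr\vu)*\eta_\kappa\big)-\Div(\vr_\kappa\vu)$ tends to $0$ in $L^1(I\times\mt)$.  The DiPerna--Lions/Friedrichs lemma requires $\tfrac1\alpha+\tfrac1\beta\le 1$ with $\vr\in L^\alpha_{\rm loc}$ and $\nabla\vu\in L^\beta_{\rm loc}$; with $\beta=2$ this forces $\vr\in L^2_{\rm loc}$, which is \emph{not} available for $N/2<\gamma<2$ (e.g.\ $N=3$ and $\gamma\in(3/2,2)$).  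The improved space–time integrability $\vr\in L^{\gamma+\Theta}$ with $\Theta<\tfrac{2}{N}\gamma-1$ does not close this either in the low‑$\gamma$ range.  Having $\theta'\in C_c$ does not help, because the commutator $r_\kappa$ involves $\vr$ itself, not $\theta(\vr)$.  So the direct mollification of the equation for $\vr$ cannot, by itself, produce the renormalized equation in the full range $\gamma>N/2$.

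The paper avoids this by never mollifying the equation for $\vr$.  It instead passes to the limit $n\to\infty$ in the (already renormalized) equations for $T_k(\vr_n)$ coming from Theorem \ref{thm:main}, obtaining a rough equation for the weak limit $T^{1,k}$ with the oscillation term $T^{2,k}$, and only \emph{then} mollifies and applies the rough It\^o formula — which works because $T^{1,k}$ is bounded (so $T^{1,k}\in L^\infty\subset L^2$ and the commutator vanishes in $L^1$).  The statement of Lemma \ref{cor:renrough} for $\vr$ is then recovered after establishing $T^{1,k}\to\vr$ strongly and $\theta'(T^{1,k})T^{2,k}\to 0$ as $k\to\infty$.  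Equivalently, once strong convergence $\vr_n\to\vr$ is known, one may pass to the limit $n\to\infty$ directly in the rough reformulation of the smooth‑noise renormalized equation for $\vr_n$.  Either way, the truncation or the strong convergence from the construction is essential; a standalone mollification argument on the limit $\vr$ does not close in the range $N/2<\gamma<2$.  If you insert the truncation step (replace $\vr$ by $T_k(\vr_n)$ before mollifying and then send $n\to\infty$, $\kappa\to 0$, $k\to\infty$ in the appropriate order) your argument aligns with the paper's.
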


\begin{proof}[Proof of Theorem~\ref{thm:mainrough} and Lemma~\ref{cor:renrough}]
Applying Definition \ref{defi-rough-path} there is $\{(Z^{n},\mathbb{Z}^{n})\}_{n=1}^{\infty}$ such that for each $n=1,2,\dots$, 
$$Z^{n}_{st}:= \delta z^{n}_{st} \quad \textnormal{ and } \quad \mathbb{Z}^{n}_{st}:=\int_s^t \delta z^{n}_{s\theta} \otimes \mathrm{d} z^{n}_\theta $$
for some smooth $z^n:[0,T]\rightarrow R^K.$
For a given $n$ we gain a weak solution $(\bfu_n,\varrho_n)$ to \eqref{eq:mom}--\eqref{eq:ene} with this data by Theorem \ref{thm:main}. Exactly as in Section \ref{sec:6} we  deduce the following convergences from the energy inequality (which is independent of $\mathbb Q$ and $\bfZ^n$):
\begin{align}
\bfu_{n}&\rightharpoonup \bfu\quad\text{in}\quad L^2(I;W^{1,2}(\mt)),\label{eq:convurough}\\
\varrho_{n}&\rightharpoonup^{\ast}\varrho\quad\text{in}\quad L^\infty(I;L^\gamma(\mt)),\label{eq:convrhorough}\\
\varrho_{n}\bfu_{n}&\rightharpoonup {\varrho}  {\bfu}\quad\text{in}\quad L^2(I, L^q(\mt)),\label{conv:rhov2rough}\\
{\varrho}_{n}  {\bfu}_{n}\otimes  {\bfu}_{n}&\rightarrow  {\varrho}  {\bfu}\otimes  {\bfu}\quad\text{in}\quad L^1(I;L^1(\mt)),\label{conv:rhovv2rough}
\end{align}
where $q\in (1,\frac{6\gamma}{\gamma+6})$ is arbitrary.
Also we have higher integrability of the density in the sense that for $0<\Theta\leq  \frac{2}{N}\gamma-1$
\begin{equation}\label{eq:gamma+1'rough}
\int_{I\times \mt}p(\varrho_{n})\varrho_{n}^{\Theta}\,\dd x\,\dd t\leq c
\end{equation}
with a constant independent of $n$, cf. Theorem~\ref{thm:main}. This yields the existence of a function $\overline p$ such that (for a subsequence)
\begin{align}\label{eq:limp'rough}
p_n(\varrho_n)\rightharpoonup\overline p\quad\text{in}\quad L^{1}(I\times\mt).
\end{align}
In order to pass to the limit in equations
 \eqref{eq:mom} and \eqref{eq:con} we aim at applying Proposition \ref{cor:apriori} to control the reminder.
 As a consequence of \eqref{eq:convurough}--\eqref{conv:rhovv2rough} we can
control all terms in the drift $\mu$ and it follows that
\[ \| \delta \mu^n_{s t} \|_{W^{- 1, 1}} \leq c \omega_{\mu} (s, t),\quad \|(\varrho_n\bfu_n,\varrho_n)\|_{L^\infty(I;W^{0,1})}\leq\,c. \]
uniformly in $n$ for some control $\omega_\mu$.
Since $\mathbb Q$ is constant and $\bfZ$ a $p$-rough path we  have
\begin{equation}
  \| A^{n, 1}_{s t} \|_{\mathcal{L} (W^{- k, 1}, W^{- k - 1, 1})} \leqslant
  c | t - s |^{\alpha} \qquad \tmop{for} \qquad k \in \{ 0, 1, 2 \},
  \label{eq:3}
\end{equation}
\begin{equation}
  \qquad \| A^{n, 2}_{s t} \|_{\mathcal{L} (W^{- k, 1}, W^{- k - 2, 1})}
  \leqslant c | t - s |^{2 \alpha} \quad \tmop{for} \quad k \in \{ 0,
  1 \} , \label{eq:4}
\end{equation}
where $\alpha=1/p$. Consequently, we can apply
Proposition \ref{cor:apriori} to infer that
\begin{align}\label{eq:reminderestimate}
\bfV^{n,\natural}\in C_2^{\frac{p}{3}-\rm{var}}(I;W^{-3,1}(\mt;R^{N+1}))
\end{align}
uniformly in $n$. Here $\bfV^{n,\natural}=(V^{n,\natural},v^{n,\natural})$ is the reminder associated to the approximate equation given by
\begin{align}\label{eq:momroughn}
\begin{aligned}
\int_{\mt}V_{st}^{n,\natural}\cdot\bfphi\dx=&\int_{\mt}\big((\varrho_n \bfu_n)(t)-(\varrho_n \bfu_n)(s)\big)\cdot \bfphi\dx -\int_s^t\int_{\mt}\varrho_n \bfu_n\otimes \bfu_n:\nabla_x \bfphi\dxs
\\
&+\int_s^t\int_{\mt}\mathbb S(\nabla_x \bfu_n):\nabla_x \bfphi \dxt-\int_s^t\int_{\mt}
p(\varrho_n)\,\Div \bfphi \dxs
\\&-\int_{\mt}(\varrho_n \bfu_n)(s)\otimes\mathbb Q:\nabla_x \bfphi\,Z^{n}_{st}\dx\\
&+\int_{\mt}(\varrho_n \bfu_n)(s)\otimes\mathbb Q:\nabla_x(\mathbb Q\cdot\nabla_x \bfphi)\,\mathbb Z^n_{st}\dx
\end{aligned}
\end{align} 
for $\bfphi\in C^\infty(\mt;R^3)$ and
\begin{align}\label{eq:conroughn}
\begin{aligned}
\int_{\mt}v_{st}^{n,\natural}\cdot\psi\dx=&\int_{\mt}\big(\varrho_n(t)-\varrho_n (s)\big)\cdot \psi\dx -\int_s^t\int_{\mt}\varrho_n \bfu_n\cdot\nabla_x \psi\dxs
\\&-\int_{\mt}\varrho_n (s)\mathbb Q\cdot\nabla_x \psi\,Z^n_{st}\dx\\
&+\int_{\mt}\varrho_n (s)\mathbb Q\cdot\nabla_x(\mathbb Q\cdot\nabla_x \psi)\,\mathbb Z^n_{st}\dx
\end{aligned}
\end{align}
for $\psi\in C^\infty(\mt)$.

Due to the convergences \eqref{eq:convurough}--\eqref{conv:rhovv2rough} and the reminder estimate \eqref{eq:reminderestimate} we can pass to the limit in the equations. In particular, the passage to the limit in the deterministic terms on the right hand sides follows by classical arguments. Noting that the rough terms are evaluated pointwise in time, their convergence  is obtained from the convergence of $\vr_{n}$ and $\vr_{n}\vu_{n}$ in a space of weakly continuous functions and the convergence of the canonical rough path $(Z^{n},\mathbb{Z}^{n})$ to $(Z,\mathbb{Z})$. Since the right hand sides converge,  we obtain the convergence of the associated left hand sides, i.e., of the remainders $V^{n,\natural}$ and $v^{n,\natural}$. From the uniform bounds for these remainders  in $C^{\frac{p}{3}-\rm{var}}_{2}(I;W^{-3,1}(\mathbb{T}^{N};R^{N+1}))$, we deduce that the limit is also a remainder, i.e. it possesses  finite $\frac{p}3$-variation. 

In particular, we obtain the following.
\begin{itemize}
 \item  The momentum equation holds in the sense
that the remainder $V_{st}^{\natural}$ given by 
\begin{align}\label{eq:apulim'rough}
\begin{aligned}
\int_{\mt}V_{st}^{\natural}\cdot\bfphi\dx=&\int_{\mt}\big((\varrho \bfu)(t)-(\varrho \bfu)(s)\big)\cdot \bfphi\dx -\int_s^t\int_{\mt}\varrho \bfu\otimes \bfu:\nabla_x \bfphi\dxs
\\
&+\int_s^t\int_{\mt}\mathbb S(\nabla_x \bfu):\nabla_x \bfphi \dxt-\int_s^t\int_{\mt}
\overline p\,\Div \bfphi \dxs
\\&-\int_{\mt}(\varrho \bfu)(s)\otimes\mathbb Q:\nabla_x \bfphi\,Z_{st}\dx\\
&+\int_{\mt}(\varrho \bfu)(s)\otimes\mathbb Q:\nabla_x(\mathbb Q\cdot\nabla_x \bfphi)\,\mathbb Z_{st}\dx
\end{aligned}
\end{align} 
for $\bfphi\in C^\infty(\mt;R^3)$ satisfies $V^\natural \in C^{\frac{p}{3}-\rm{var}}_{2}(I,W^{-3,1}(\mt;R^3))$.
\item The continuity equation holds in the sense that
the remainder $v_{st}^{\natural}$ given by 
\begin{align}\label{eq:apvarrholimrough}
\begin{aligned}
\int_{\mt}v_{st}^{\natural}\cdot\psi\dx=&\int_{\mt}\big(\varrho(t)-\varrho (s)\big)\cdot \psi\dx -\int_s^t\int_{\mt}\varrho \bfu\cdot\nabla_x \psi\dxs
\\&-\int_{\mt}\varrho (s)\mathbb Q\cdot\nabla_x \psi\,Z_{st}\dx\\
&+\int_{\mt}\varrho (s)\mathbb Q\cdot\nabla_x(\mathbb Q\cdot\nabla_x \psi)\,\mathbb Z_{st}\dx
\end{aligned}
\end{align}
for all $\psi\in C^\infty(\mt)$ satisfies $v^\natural \in C^{\frac{p}{3}-\rm{var}}_{2}(I,W^{-3,1}(\mt))$.
\end{itemize}

We are left with the task of proving strong convergence of the density.
Using again the $L^\infty$-truncation introduced in \eqref{eq:Tk'} it holds
\begin{align}
 T_k(\varrho_{n})&\rightharpoonup {T}^{1,k}\quad\text{in}\quad C_w(\overline I;L^p( \mt))\quad\forall p\in[1,\infty),\label{eq:Tk1'rough}\\
\big(T_k'(\varrho_{n})\varrho_{n}-T_k(\varrho_{n})\big)\Div \bfu_{n}&\rightharpoonup{T}^{2,k}
\quad\text{in}\quad L^2(I\times\mt),\label{eq:Tk2'rough}
\end{align}
for some limit functions ${T}^{1,k}$ and ${T}^{2,k}$ and we have again
\begin{align}\label{eq:flux'rough}
\begin{aligned}
\int_{I\times\mt}&\big( p(\varrho_{n})-(\lambda+2\mu)\Div \bfu_{n}\big)\,T_k(\varrho_{n})\dxt\\&\longrightarrow\int_{I\times\mt} \big( \overline{p}-(\lambda+2\mu)\Div \bfu\big)\,T^{1,k}\dxt.
\end{aligned}
\end{align}
The equality \eqref{eq:flux'rough} is not effected by the noise as we work
under the additional assumption that $\mathbb Q=(\alpha_k \mathbb I_{N\times N})_{k=1}^K$ with $\alpha_k\in R$.
In fact we have now
\begin{align*}
& \int_I\int_ {\mt}\left[ u^i_n \Big( \vr_n \partial_{x_j} \Del^{-1} \partial_{x_i} [ T_k(\vr_n) Q_{j,k}] - \vr_n Q_{j,k} \partial_{x_j} \Del^{-1} \partial_{x_i} [T_k(\vr_n)] \Big) \right] \partial_t z^{n}_k\dxs\\
& =\int_I\int_ {\mt}Q_{j,k}\left[ u^i_n \Big( \vr_n \partial_{x_j} \Del^{-1} \partial_{x_i} [ T_k(\vr_n) ] - \vr_n  \partial_{x_j} \Del^{-1} \partial_{x_i} [T_k(\vr_n)] \Big) \right] \partial_t z^{n}_k\dxs=0
\end{align*}
and the same for the quantities in the limit. In order to compute the right-hand side of \eqref{eq:flux'rough} we have to apply
the product rule as in Section 4.2 \cite{HLN}
in order to compute the equation for $t\mapsto \int_{\mt}\varrho\bfu\cdot\Delta^{-1}\nabla_x J^\eta(T^{1,k})\dx$ with the mollification $J^\eta$.
This leads to the terms
\begin{align*}
& \int_I\int_ {\mt}\left[ u^i_n \Big( \vr_n \partial_{x_j} \Del^{-1} \partial_{x_i} [ J^\eta(T^{1,k}) Q_{j,k}] - \vr_n Q_{j,k} \partial_{x_j} \Del^{-1} \partial_{x_i} [J^\eta(T^{1,k})] \Big) \right] \dx\,\dd \bfZ^k\\
& =\int_I\int_ {\mt}Q_{j,k}\left[ u^i_n \Big( \vr_n \partial_{x_j} \Del^{-1} \partial_{x_i} [ J^\eta(T^{1,k})] - \vr_n  \partial_{x_j} \Del^{-1} \partial_{x_i} [J^\eta(T^{1,k})] \Big) \right] \dx\,\dd \bfZ^k=0,
\end{align*}
which vanish again. Passing with $\eta\rightarrow 0$ and dealing with the deterministic terms as in the known theory yields \eqref{eq:flux'rough}.

Now, we shall prove that $\varrho$ is a renormalized solution as stated in Lemma~\ref{cor:renrough}.
Reformulating \eqref{eq:ren} with $\theta=T_k$ in the rough path sense, we observe that the  corresponding remainders are again bounded uniformly in $n$ as a consequence of Proposition~\ref{cor:apriori} and the uniform energy estimates. Hence we may pass to the limit $n\rightarrow\infty$ and we arrive at
\begin{align}\label{eq:Tk''rough}
\partial_t T^{1,k}+\Div\big( T^{1,k}\bfu\big)+T^{2,k}= \Div\big(T^{1,k} \mathbb Q\big)\dot{\bfZ}
\end{align}
in the sense of distributions in $I\times\mt$. This equation has to interpreted in the sense that
$v_{st}^{T,\natural}$ given by 
\begin{align}\label{eq:conrough}
\begin{aligned}
\int_{\mt}v_{st}^{T,\natural}\cdot\psi\dx=&\int_{\mt}\big(T^{1,k}(t)-T^{1,k}(s)\big)\cdot \psi\dx -\int_s^t\int_{\mt}T^{1,k} \bfu\cdot\nabla_x \psi\dxs
\\&+\int_s^t\int_{\mt}T^{2,k} \, \psi\dxs-\int_{\mt}T^{1,k} (s)\mathbb Q\cdot\nabla_x \psi\,Z_{st}\dx\\
&+\int_{\mt}T^{1,k} (s)\mathbb Q\cdot\nabla_x(\mathbb Q\cdot\nabla_x \psi)\,\mathbb Z_{st}\dx
\end{aligned}
\end{align}
for all $\psi\in C^\infty(\mt)$ satisfies $v^{T,\natural} \in C^{\frac{p}{3}-\rm{var}}_{2}(I,W^{-3,1}(\mt))$.
Now, we need to apply smoothing in space to obtain the renormalized formulation
 \begin{align}\label{eq:Tk''renrough}
\partial_t \theta(T^{1,k})+\Div\big( \theta(T^{1,k})\bfu\big)+\theta'(T^{1,k})T^{2,k}= \Div\big(\theta(T^{1,k}) \mathbb Q\big)\dot{\bfZ}
\end{align}
in the sense of distributions
for all $\theta\in C^1([0,\infty))$ with $\theta'(z)=0$ for all $z\geq M_\theta$. It has to be interpreted in the sense that
the remainder $v_{st}^{T,\theta,\natural}$ given by 
\begin{align}\label{eq:renrough'}
\begin{aligned}
\int_{\mt}v_{st}^{T,\theta,\natural}\cdot\psi\dx=&\int_{\mt}\big(\theta(T^{1,k}(t))-\theta(T^{1,k} (s))\big)\cdot \psi\dx -\int_s^t\int_{\mt}\theta(T^{1,k}) \bfu\cdot\nabla_x \psi\dxs
\\&+\int_s^t\int_{\mt}\big(\theta(T^{1,k})-\theta'(T^{1,k})T^{1,k}\big)\,\Div\bfu\psi\dxs\\&+\int_s^t\int_{\mt}\theta'(T^{1,k}) T^{2,k}\psi\dxs
-\int_{\mt}\theta(T^{1,k} (s))\mathbb Q\cdot\nabla_x \psi\,Z_{st}\dx\\
&+\int_{\mt}\theta(T^{1,k} (s))\mathbb Q\cdot\nabla_x(\mathbb Q\cdot\nabla_x \psi)\,\mathbb Z_{st}\dx
\end{aligned}
\end{align} 
for all $\psi\in C^\infty(\mt)$ satisfies $v^{T,\theta,\natural} \in C^{\frac{p}{3}-\rm{var}}_{2}(I,W^{-3,1}(\mt))$.
More precisely, we shall mollify the rough formulation of the continuity equation, then apply the It\^o formula \cite[Proposition 7.6]{FrHa14} pointwise in $x$ and  pass to the limit to remove the mollification. As usual for this step, we need a uniform estimate  for the associated remainders. This is a consequence of Proposition~\ref{cor:apriori} and the boundedness of $T_{k}$.

We can now prove a counterpart of \eqref{eq.amplosc''} which is again not effected by the noise and conclude $T^{1,k}\rightarrow \varrho$
and $\theta'(T^{1,k})T^{2,k}\rightarrow0$ as $k\rightarrow\infty$. Using this in \eqref{eq:renrough'} proves Lemma \ref{cor:renrough} -- the renormalized equation of continuity.
The proof of the strong convergence of $\varrho_n$ follows again \cite[Section 7.3]{feireisl1} and is not effected by the noise.
\end{proof}

\section{Stratonovich noise}
\label{sec:strat}

In this section we study the compressible Navier--Stokes system subject to transport noise of Stratonovich-type.
\begin{align}
	\D \vr + \Div (\vr \vu) \dt &= \Div (\vr \mathbb{Q} ) \circ \D \bfW
	\label{p1'} \\ 
	\D (\vr \vu) + \Div (\vr \vu \otimes \vu) \dt + \Grad p(\vr) \dt &= \Div \mathbb{S}(\Ds \vu) \dt + \Div (\vr \vu \otimes \mathbb{Q} ) \circ \D \bfW ,
\label{p2'}
\end{align}
where 
$\bfW=(W_k)_{k=1}^K$ is a collection of standard Wiener processes and $\mathbb Q=(\bfQ_{k})_{k=1}^{K}\subset R^{N}$.\footnote{Our theory would also allow to consider a cylindrical Wiener process provided the sequence $\bfQ_k$ converges to zero rapidly enough.} The stochastic integrals in \eqref{p1'} and \eqref{p2'} are understood in the Stratonovich sense. In the next subsection we give a rigorous meaning to that and define a weak martingale solution to  \eqref{p1'}--\eqref{p2'}. Eventually, we prove its existence by a Wong--Zakai type argument.

\subsection{Stratonovich integration}
Let $(\Omega,\mathcal F,(\mathcal F_t),\mathbb P)$ be a filtered probability space and let $\bfW=(W_k)_{k=1}^K$ be a collection of standard $(\mathcal F_t)$-Wiener processes.
We define the Stratonovich integrals in \eqref{p1'} and \eqref{p2'} by means of the It\^{o}-Stratonovich correction.
First of all we can define the stochastic integrals
\begin{align*}
&\int_0^t\Div (\vr \mathbb{Q} ) \,\D \bfW=\sum_{k=1}^K\int_0^t\Div (\vr \bfQ_k ) \,\D W_k,\\
&\int_0^t\Div (\bfq\otimes \mathbb{Q} ) \,\D \bfW=\sum_{k=1}^K\int_0^t\Div (\bfq\otimes \bfQ_k ) \,\D W_k,
\end{align*}
as It\^{o}-integrals on the Hilbert spaces $W^{-1,2}(\mt,R^N)$ and $W^{-1,2}(\mt)$. Indeed, if $\vr$ and $\bfq$
are $(\mathcal F_t)$ stochastic process taking values in
$C_w([0,T];L^{2N/(N+2)}(\mathbb{T}^{N}))$ and $C_w([0,T];L^{2N/(N+2)}(\mathbb{T}^{N};R^N))$ respectively,
the It\^{o}-integrals 
\begin{align*}
&\int_0^t\langle\Div (\vr \mathbb{Q} ),\psi\rangle \,\D \bfW=-\sum_{k=1}^K\int_0^t\int_{\mt}\vr \bfQ_k \cdot\nabla\psi\dx \,\D W_k,\quad\psi\in W^{1,2}(\mt)\\
&\int_0^t\langle\Div (\bfq\otimes \mathbb{Q} ),\bfphi\rangle \,\D \bfW=-\sum_{k=1}^K\int_0^t\int_{\mt}\bfq\otimes \bfQ_k :\nabla\nabla_{x}\bfphi\dx \,\D W_k,\quad\bfphi\in W^{1,2}(\mt,R^N),
\end{align*}
are well-defined. The corresponding Stratonovich integrals are now defined via the It\^{o}-Stratonovich correction, that is
\begin{align*}
&\int_0^t\int_{\mt}\vr \bfQ_k \cdot\nabla_x\psi\dx \,\circ\D W_k=\int_0^t\int_{\mt}\vr \bfQ_k \cdot\nabla_x\psi\dx \,\D W_k+\frac{1}{2}\Big\langle\Big\langle\int_{\mt}\vr \bfQ_k \cdot\nabla_x\psi\dx,W_k\Big\rangle\Big\rangle_t,\\
&\int_0^t\int_{\mt}\bfq\otimes \bfQ_k :\nabla_x\bfphi\dx \,\circ\D W_k=\int_0^t\int_{\mt}\bfq\otimes \bfQ_k :\nabla_x\bfphi\dx \,\D W_k+\frac{1}{2}\Big\langle\Big\langle\int_{\mt}\bfq\otimes \bfQ_k :\nabla_x\bfphi\dx,W_k\Big\rangle\Big\rangle_t,
\end{align*}
Here $\langle\langle\cdot,\cdot\rangle\rangle_t$ denotes the cross variation. We compute now the cross variations by means of \eqref{p1'} and \eqref{p2'}. We have
\begin{align*}
\int_{\mt}\vr \bfQ_k \cdot\nabla_x\psi\dx&=\dots-\sum_{\ell}\int_0^t\int_{\mt}\vr \bfQ_\ell \cdot\nabla_x(\bfQ_k \cdot\nabla_x\psi)\dx \,\D W_\ell,\quad\psi\in W^{2,2}(\mt),\\
\int_{\mt}\bfq\otimes \bfQ_k :\nabla_{x}\bfphi\dx&=\dots-\sum_{\ell}\int_0^t\int_{\mt}\bfq\cdot (\bfQ_\ell \cdot\nabla_{x}(\bfQ_k\cdot\nabla_{x}\bfphi))\dx \,\D W_\ell,\quad\bfphi\in W^{2,2}(\mt,R^N),
\end{align*}
where the deterministic terms of the equations with quadratic variation zero are hidden in $\dots$. Plugging the previous considerations together we set
\begin{align*}
&\int_0^t\Div (\vr \mathbb{Q} ) \,\circ\D \bfW=\sum_{k=1}^K\int_0^t\Div (\vr \bfQ_k ) \,\D W_k+\frac{1}{2}\sum_{k=1}^K\int_0^t\Div(\bfQ_k \otimes\bfQ_k\nabla_{x}\varrho) \ds,\\
&\int_0^t\Div (\bfq\otimes \mathbb{Q} ) \,\circ\D \bfW=\sum_{k=1}^K\int_0^t\Div (\bfq\otimes \bfQ_k ) \,\D W_k+\frac{1}{2}\sum_{k=1}^K\int_0^t\Div(\bfQ_k\otimes\bfQ_k\nabla_{x}\bfq)\ds,
\end{align*}
to be understood in $W^{-2,2}(\mt)$ and $W^{-2,2}(\mt,R^N)$ respectively.

Now we can define the objects of interest properly. Given initial data $\bfq_0\in L^{2\gamma/(\gamma+2)}(\mt;R^N)$ and $\vr_0\in L^\gamma(\mt)$,\footnote{We could also allow random initial data in form of an initial law with suitable moments.}
a weak martingale solution to \eqref{p1'}--\eqref{p2'} is a multiplet
\begin{align*}
((\Omega,\mathcal F,(\mathcal F_t),\mathbb P),\bfu,\varrho,\bfW)
\end{align*}
with a filtered probability space $(\Omega,\mathcal F,(\mathcal F_t),\mathbb P)$, an $(\mathcal F_t)$-Wiener process $\bfW$ and $(\bfu,\varrho)$ are $(\mathcal F_t)-$adapted\footnote{The velocity field is not a stochastic process in the classical sense and we understand its adaptedness in the sense of random distributions as introduced in \cite[Chap. 2.2]{BFHbook}.}
and satisfy the following: 
\begin{itemize}
\item\label{D1} The momentum equation holds in the sense that\begin{align}\label{eq:momdW}
\begin{aligned}
&\int_{\mt} \varrho \bfu\cdot \bfphi\dx -\int_0^t\int_{\mt}\varrho \bfu\otimes \bfu:\nabla_x \bfphi\dxs
\\
&+\int_0^t\int_{\mt}\mathbb S(\nabla_x \bfu):\nabla_x \bfphi \dxs-\int_0^t\int_{\mt}
p(\varrho)\,\Div \bfphi \dxs
\\&=\int_{\mt}\bfq_0 \cdot \bfphi\dx-\int_0^t\int_{\mt}\varrho \bfu\otimes\mathbb Q:\nabla_x \bfphi\,\bfW\dx\\
&+\sum_{k=1}^K\int_0^t\int_{\mt}\vr\bfu\cdot\Div(\bfQ_k\otimes\bfQ_k\nabla_{x}\bfphi)\dx \ds
\end{aligned}
\end{align} 
$\mathbb P$-a.s. for all $\bfphi\in C^\infty(\mt;R^{N})$ and we have $\varrho\bfu(0)=\bfq_0$.
\item\label{D2}  The continuity equation holds in the sense that
\begin{align}\label{eq:condW}
\begin{aligned}
\int_{\mt}\varrho\psi\dx-
\int_0^t\int_{\mt}\varrho \bfu\cdot\nabla_{x}\psi\dxs&=\int_{\mt}\varrho_0 \psi\dxt\\
&-\int_0^t\int_{\mt}\varrho \mathbb Q\cdot\nabla_x \psi\,\dd \bfW\dx\\
&+\frac{1}{2}\sum_{k=1}^K\int_0^t\int_{\mt}\vr\Div(\bfQ_k \otimes\bfQ_k\nabla_{x}\psi)\dx \ds
\end{aligned}
\end{align}
$\mathbb P$-a.s.  for all $\psi\in C^\infty(\mathbb{T}^{N})$ and we have $\varrho(0)=\varrho_0$. 
\item \label{D3} The energy inequality is satisfied in the sense that
\begin{align} \label{eq:enedW}
\begin{aligned}
- \int_I &\partial_t \psi \,
\mathscr E \dt+\int_I\psi\int_{\mt}\mathbb S(\nabla_x \bfu):\nabla_x \bfu\dxt\leq
\psi(0) \mathscr E(0)
\end{aligned}
\end{align}
holds $\mathbb P$-a.s. for any $\psi \in C^\infty_c([0, T))$.
Here, we abbreviated
$$\mathscr E(t)= \int_{\Omega(t)}\Big(\frac{1}{2} \varrho(t) | {\bfu}(t) |^2 + P(\varrho(t))\Big)\dx$$
and the pressure potential is given by $P(\varrho)=\frac{a}{\gamma-1}\varrho^\gamma$.
\end{itemize}

\subsection{Convergence {\`a} la Wong--Zakai}
In the previous section we showed the existence of a weak solution
$(\mathbf{u}, \varrho) $ to the system
\eqref{p1}--\eqref{p2} driven by a general rough path $\bfZ$. This in particular includes the case of $\bfZ$ being a realization of the Stratonovich lift $(\mathbf{W},\mathbb{W})$ of a $K$-dimensional Brownian motion
$\mathbf{W}$  on some probability space $(\Omega,
\mathcal{F}, \mathbb{P})$. Namely,  for $0 \leqslant s
\leqslant t \leqslant T$ let
\[ \mathbf{W}_{s t} = \delta \mathbf{W}_{s t} := \mathbf{W}_t -
   \mathbf{W}_s, 
   \qquad \mathbb{W}_{s t} := \int_s^t \delta
   \mathbf{W}_{s r} \otimes \circ \mathd \mathbf{W}_r = \left( \int_s^t
   (\mathbf{W}_{i, r} - \mathbf{W}_{i, s}) \circ \mathd \mathbf{W}_{j,
   r} \right)_{i, j = 1, \ldots, K}. \]
Here we denoted by $\circ$ the Stratonovich stochastic integration. Then $(\mathbf{W},\mathbb{W})$ is a random rough path, in the sense that a.s. $(\mathbf{W},\mathbb{W})(\omega)$ is  a geometric $p$-rough path with $p\in[2,3)$.

In other words, the second component of the rough path, i.e. $\mathbb{W}$ is obtained by using probability theory. But once this is done, we can fix $\omega$ from the set of full probability where the Stratonovich integral is defined and apply the results of Section~\ref{s:r} pathwise, i.e. to the rough path $(\mathbf{W},\mathbb{W})(\omega)$. As the proof made use of compactness and relied on taking subsequences which  generally depend on $\omega$,  the resulting solution is not a stochastic process. The goal of this section is to overcome this issue  and to construct an honest probabilistically weak solution,  adapted to the joint canonical filtration generated by the solution and the Brownian motion. This is achieved by combining the rough path analysis from Section~\ref{s:r} with stochastic compactness arguments based on Skorokhod--Jakubowski's representation theorem. The combination of the rough path theory with the stochastic compactness was done in \cite{FHLN}.

In the first step, we observe that from the uniform energy and pressure estimates we obtain tightness by taking expectation. This is permitted due to the following control of the approximate drivers, obtained e.g. via mollifications  $\mathbf{W}^{n}$ of $\mathbf{W}$. It holds
\begin{equation}
  \| A^{n, 1}_{s t} \|_{\mathcal{L} (W^{- k, 1}, W^{- k - 1, 1})} \leqslant
  C_{A^n} | t - s |^{\alpha} \qquad \tmop{for} \qquad k \in \{ 0, 1, 2 \},
  \label{eq:3}
\end{equation}
\begin{equation}
  \qquad \| A^{n, 2}_{s t} \|_{\mathcal{L} (W^{- k, 1}, W^{- k - 2, 1})}
  \leqslant C^2_{A^n} | t - s |^{2 \alpha} \quad \tmop{for} \quad k \in \{ 0,
  1 \} , \label{eq:4}
\end{equation}
where  the constant $C_{A^n}$ is random. By 
\cite[Exercise 10.14]{FrHa14}  we know that for all $q \in [1, \infty)$
\[ \sup_{n \in \mathbb{N}} \mathbb{E} [C^q_{A^n}] < \infty, \]
and there exists a random constant $C (\omega)$ such that
\[ \sup_{n \in \mathbb{N}} C_{A^n} (\omega) \leqslant C (\omega) . \]

As in the proof of Proposition 15 in \cite{FHLN}, the rough driver needs to be included in the compactness argument by enlarging the path space from \cite[Section 4.5]{BFHbook} to 
\[
\mathcal{X} =  \mathcal{X}_{\vr} \times \mathcal{X}_{\vr \vu} \times \mathcal{X}_{\bfu} 
\times\mathcal{X}_{\mathbf{W}} \times \mathcal{X}_{\nu},
\]
where
\begin{align*}
\mathcal{X}_{\vr} &= \big(L^{\gamma + \Theta}(0,T;L^{\gamma + \Theta}(\Q)),w\big) \cap C\big([0,\infty);\big( L^\gamma(\Q),w\big)\big)\cap
C([0,T]; W^{-k,2}(\Q)),\\
\mathcal{X}_{\vr \vu} &= C\big([0,T]; \big(L^{\frac{2\gamma}{\gamma + 1}}(\Q;R^N),w\big)\big)\cap
C([0,T]; W^{-k,2}(\Q;R^N)),\\
\mathcal{X}_{\bfu} &= \left(L^{2}(0,T; W^{1,2}(\Q,R^N)),w\right),\\
\mathcal{X}_{\mathbf{W}} &= C^{p-\rm{var}}_{2}([0,T];R^{K})\times C^{p/2-\rm{var}}_{2}([0,T];R^{K\times K}) ,\\
\mathcal{X}_{\nu} &= (L^\infty((0,T) \times \Q; {\rm Prob}( \R^{N^2+N+1})),w^*).
\end{align*}
With a slight abuse of notation, in the definition of $\mathcal{X}_{\mathbf{W}}$
 we employ the separable versions of the spaces $C^{p-\rm{var}}_{2}([0,T];R^{K})$, $ C^{p/2-\rm{var}}_{2}([0,T];R^{K\times K})$, i.e. the spaces obtained as closure of smooth functions in the $p$-variation and $p/2$-variation norm, respectively. 
 
Let $(\vr_{n},\vu_{n})$ be a solution corresponding to the driver $(\mathbf{W}^{n},\mathbb{W}^{n})$ obtained in Section~\ref{s:8}.  As  in \cite[Section~4.5]{BFHbook}
we can show that the
family of joint laws
\[
\left\{\mathcal{L} \left[ \vr_n,
\vr_n \vu_n,\bfu_n,  (\mathbf{W}^{n},\mathbb{W}^{n}), \delta_{[\vr_n, \vu_n,\nabla\vu_n]}\right] ;\,n\in\mathbb N\right\}
\]
is tight on $\mathcal X$. By the Jakubowski-Skorokhod theorem
\cite{jakubow} we obtain new a new probability space $(\tilde\Omega,\tilde{\mathcal F},\tilde{\mathbb P})$ and a sequence of new random variables
$$\left[\tilde\vr_m,
\tilde\vr_n \tilde\vu_n,\tilde\bfu_n,  (\tilde{\mathbf{W}}_{n},\tilde{\mathbb{W}}_{n}), \tilde\nu_n\right] ,\,n\in\mathbb N,$$
 with values in $\mathcal X$ with the same law as the original ones converging $\mathbb P$-a.s. in the topology of $\mathcal X$ to
 $$\left[\tilde\vr,
\tilde\vr \tilde\vu,\tilde\bfu,  (\tilde{\mathbf{W}},\tilde{\mathbb{W}}), \tilde\nu\right] .$$
Hence it is clear that equations \eqref{eq:mom}--\eqref{eq:ene} continue to hold on the new probability space.
 Moreover, the passage to the limit in the deterministic as well as in the rough terms in the equations proceeds as in Section \ref{sec:roughNS} above.
Finally, the identification of the limit driver as the lift of a Brownian motion on the new probability space can be done as in Proposition 15 in \cite{FHLN}. As a consequence of this together with adaptedness with respect to the joint filtration
\begin{align*}
\tilde{\mathcal F}_t&=\sigma\Big(\sigma \big(\bfr_t\tilde{\varrho},\bfr_t\tilde{\bfu},\bfr_t\tilde{\mathbf{W}}\big)\cup\big\{\mathcal N\in\tilde{\mathcal F};\;\tilde{\p}(\mathcal N)=0\big\}\Big),\quad t\geq0,
\end{align*}
we conclude that the system is solved in the sense of Stratonovich stochastic integration as specified in \eqref{eq:momdW}--\eqref{eq:enedW}. In conclusion, we deduce the following result.

\begin{Theorem}\label{thm:mainstrat}
Assume that we have
\begin{align*}
\frac{|\bfq_0|^2}{\varrho_0}&\in L^1(\mt),\ \varrho_0\in L^{\gamma}(\mt).
\end{align*}
Furthermore, suppose that $\mathbb Q=(\bfQ_k)_{k=1}^K$ with $\bfQ_k\in R^{N}$.
 There is a weak martingale solution 
\begin{align*}
((\Omega,\mathcal F,(\mathcal F_t),\mathbb P),\bfu,\varrho,\bfW)
\end{align*}
 to \eqref{p1'}--\eqref{p2'} in the sense of \eqref{eq:momdW}--\eqref{eq:enedW}. 

Furthermore, the solution is obtained as a limit of solutions to \eqref{p1}--\eqref{p2} driven by 
 smooth approximations $\mathbf{W}^{n}$ of the Brownian motion $\mathbf{W}$.

\end{Theorem}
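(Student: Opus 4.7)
The plan is to run a Wong--Zakai scheme combining the rough path existence theory from Section~\ref{s:r} with the stochastic compactness method of \cite{BFHbook,FHLN}. Fix a filtered probability space $(\Omega,\mathcal F,(\mathcal F_t),\mathbb P)$ carrying the $K$-dimensional Wiener process $\mathbf W$ and let $\mathbf W^n$ be, for instance, piecewise linear (or mollified) approximations of $\mathbf W$. Define their canonical rough lifts $(\mathbf W^n,\mathbb W^n)$ with $\mathbb W^n_{st}=\int_s^t\delta\mathbf W^n_{sr}\otimes \mathrm{d}\mathbf W^n_r$. By the classical convergence of Wong--Zakai approximations to the Stratonovich lift $(\mathbf W,\mathbb W)$ in $p$-variation for $p\in[2,3)$, together with the moment bounds from \cite[Exercise~10.14]{FrHa14}, we have almost sure convergence $(\mathbf W^n,\mathbb W^n)\to(\mathbf W,\mathbb W)$ in $\mathcal X_{\mathbf W}$ with uniform moment estimates on the rough path norm.

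For each realization and each $n$, invoke Theorem~\ref{thm:main} applied with smooth driver $\mathbf W^n$ and constant vector fields $\mathbb Q$ to obtain a weak solution $(\vr_n,\vu_n)$ with initial data $(\vr_0,\bfq_0)$. Because $\mathbb Q$ is $x$-independent the energy identity \eqref{eq:ene} and the higher pressure integrability estimate \eqref{eq:higherpressure} are independent of $\mathbf W^n$ and $\mathbb Q$, so we obtain pathwise uniform bounds exactly as in \eqref{wWS47}--\eqref{wWS49} and \eqref{eq:gamma+1'}. Taking expectations gives uniform bounds in $L^q(\Omega;\cdot)$ for all finite $q$, hence tightness of the laws of
\[
\bigl(\vr_n,\ \vr_n\vu_n,\ \vu_n,\ (\mathbf W^n,\mathbb W^n),\ \delta_{[\vr_n,\vu_n,\nabla\vu_n]}\bigr)
\]
on the path space $\mathcal X$ introduced above. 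Tightness of the first four components follows exactly as in \cite[Section~4.5]{BFHbook} with the additional observation that the reminder bound \eqref{eq:reminderestimate} propagates to give equicontinuity for $\vr_n$ and $\vr_n\vu_n$ in $C([0,T];W^{-k,2}(\mt))$; tightness of the rough drivers is the Wong--Zakai convergence recalled above.

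Applying Jakubowski's version of the Skorokhod theorem, pass to a new probability space $(\tilde\Omega,\tilde{\mathcal F},\tilde{\mathbb P})$ carrying random variables $(\tilde\vr_n,\tilde\vr_n\tilde\vu_n,\tilde\vu_n,(\tilde{\mathbf W}^n,\tilde{\mathbb W}^n),\tilde\nu_n)$ with the same laws and converging $\tilde{\mathbb P}$-a.s.\ to a limit $(\tilde\vr,\tilde\vr\tilde\vu,\tilde\vu,(\tilde{\mathbf W},\tilde{\mathbb W}),\tilde\nu)$ in the topology of $\mathcal X$. Since the rough formulations \eqref{eq:momroughn}--\eqref{eq:conroughn} of the continuity and momentum equations together with the energy inequality are measurable functionals of the trajectory, they transfer to the new space. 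Pathwise on a set of full probability we are now in the setting of the proof of Theorem~\ref{thm:mainrough}: the a priori reminder bound of Proposition~\ref{cor:apriori} together with the convergences \eqref{eq:convurough}--\eqref{conv:rhovv2rough} lets us pass to the limit in every term of \eqref{eq:momroughn}--\eqref{eq:conroughn}, in the renormalized continuity equation \eqref{eq:Tk''renrough}, and in the effective viscous flux identity \eqref{eq:flux'rough}, and hence recover strong convergence of $\tilde\vr_n$ as in Section~\ref{s:r}. The energy inequality \eqref{eq:enedW} passes to the limit by lower semicontinuity.

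It remains to identify the limit as a genuine martingale solution with the Stratonovich integration. By the Wong--Zakai convergence, $(\tilde{\mathbf W},\tilde{\mathbb W})$ is $\tilde{\mathbb P}$-a.s.\ the Stratonovich lift of $\tilde{\mathbf W}$, and $\tilde{\mathbf W}$ is an $(\tilde{\mathcal F}_t)$-Brownian motion with respect to the canonical joint filtration
\[
\tilde{\mathcal F}_t=\sigma\!\left(\sigma(\mathbf r_t\tilde\vr,\mathbf r_t\tilde\vu,\mathbf r_t\tilde{\mathbf W})\cup\{\mathcal N\in\tilde{\mathcal F}:\tilde{\mathbb P}(\mathcal N)=0\}\right);
\]
this is proved by a martingale characterization exactly as in \cite[Proposition~15]{FHLN}. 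With $\mathbf W$ a genuine Brownian motion and $\mathbb W$ its Stratonovich lift, the rough integrals appearing in \eqref{eq:apulim'rough} and \eqref{eq:apvarrholimrough} coincide with the corresponding Stratonovich stochastic integrals, and by the It\^o--Stratonovich conversion derived in Section~\ref{sec:strat} they equal the expressions on the right-hand sides of \eqref{eq:momdW}--\eqref{eq:condW} (with the It\^o integrals against $\tilde{\mathbf W}$ and the classical correction involving $\frac12\sum_k\Div(\bfQ_k\otimes\bfQ_k\nabla_x\cdot)$). The main obstacle is exactly this last step: verifying that the pathwise rough integrals produced by the Skorokhod construction coincide with the probabilistic Stratonovich integrals against the newly constructed Brownian motion, and that the resulting solution is adapted to $(\tilde{\mathcal F}_t)$ rather than merely measurable with respect to all of $\tilde{\mathcal F}$. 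Once this identification is carried out, \eqref{eq:momdW}--\eqref{eq:enedW} hold and the proof is complete.
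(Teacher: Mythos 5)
Your proposal follows essentially the same route as the paper: Wong--Zakai approximations of the Brownian rough path, application of the smooth-noise theory of Section~\ref{s:8} to each $(\mathbf W^n,\mathbb W^n)$, tightness on the enlarged path space $\mathcal X$ including the rough driver, Jakubowski--Skorokhod, pathwise passage to the limit via the rough-path machinery of Section~\ref{s:r}, and identification of the limit driver and adaptedness via \cite[Proposition~15]{FHLN}. The concluding caveat you raise about matching pathwise rough integrals with Stratonovich stochastic integrals and about adaptedness is precisely what \cite[Proposition~15]{FHLN} and the joint canonical filtration resolve, so the argument as you wrote it is complete and consistent with the paper's own proof.
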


\end{document}